\providecommand{\U}[1]{\protect\rule{.1in}{.1in}}
\newtheorem{theorem}{Theorem}
\newtheorem{definition}[theorem]{Definition}
\newtheorem{lemma}[theorem]{Lemma}
\newtheorem{proposition}[theorem]{Proposition}
\newenvironment{proof}[1][Proof]{\noindent\textbf{#1.} }{\ \rule{0.5em}{0.5em}}
\begin{document}

\title{The sharp maximal function approach to \\$L^{p}$ estimates for operators structured \\on H\"{o}rmander's vector fields\thanks{Key words: H\"{o}rmander's vector
fields, Carnot groups, nonvariational operators, $L^{p}$ estimates, local
sharp maximal function;\ MSC: Primary: 35H10; Secondary: 35B45, 35R05, 42B25.}}
\author{Marco Bramanti, Marisa Toschi}
\maketitle

\begin{abstract}
We consider a nonvariational degenerate elliptic operator of the kind%
\[
Lu\equiv\sum_{i,j=1}^{q}a_{ij}(x)X_{i}X_{j}u
\]
where $X_{1},...,X_{q}$ are a system of left invariant, $1$-homogeneous,
H\"{o}rmander's vector fields on a Carnot group in $\mathbb{R}^{n}$, the
matrix $\left\{  a_{ij}\right\}  $ is symmetric, uniformly positive on a
bounded domain $\Omega\subset\mathbb{R}^{n}$ and the coefficients satisfy%
\[
a_{ij}\in VMO_{loc}\left(  \Omega\right)  \cap L^{\infty}\left(
\Omega\right)  .
\]
We give a new proof of the interior $W_{X}^{2,p}$ estimates%
\[
\left\Vert X_{i}X_{j}u\right\Vert _{L^{p}\left(  \Omega^{\prime}\right)
}+\left\Vert X_{i}u\right\Vert _{L^{p}\left(  \Omega^{\prime}\right)  }\leq
c\left\{  \left\Vert Lu\right\Vert _{L^{p}\left(  \Omega\right)  }+\left\Vert
u\right\Vert _{L^{p}\left(  \Omega\right)  }\right\}
\]
for $i,j=1,2,...,q$, $u\in W_{X}^{2,p}\left(  \Omega\right)  ,$ $\Omega
^{\prime}\Subset\Omega$ and $p\in\left(  1,\infty\right)  $, first proved by
Bramanti-Brandolini in \cite{bb1}, extending to this context Krylov'
technique, introduced in \cite{K1}, consisting in estimating the sharp maximal
function of $X_{i}X_{j}u.$

\end{abstract}

\section{Introduction}

Let us consider a linear second order elliptic operator in nondivergence form:%
\[
Lu\equiv\sum_{i,j=1}^{n}a_{ij}\left(  x\right)  u_{x_{i}x_{j}}%
\]
with $\left\{  a_{ij} \right\}  $ symmetric matrix of bounded measurable
functions defined on some domain $\Omega\subset\mathbb{R}^{n}$ and satisfying
the uniform ellipticity condition%
\[
\mu\left\vert \xi\right\vert ^{2}\leq\sum_{i,j=1}^{n}a_{ij}\left(  x\right)
\xi_{i}\xi_{j}\leq\frac{1}{\mu}\left\vert \xi\right\vert ^{2}%
\]
for some $\mu>0$, every $\xi\in\mathbb{R}^{n}$, a.e. $x\in\Omega$. While the
classical $W^{2,p}$-theory of elliptic equations, dating back to
Agmon-Douglis-Nirenberg \cite{ADN} and essentially exploiting the $L^{p}$
theory of singular integrals due to Calder\'{o}n-Zygmund \cite{CZ} requires
the uniform continuity of the coefficients $a_{ij}\left(  x\right)  $, in 1993
Chiarenza-Frasca-Longo \cite{Chiarenza-Frasca-Longo} proved $W^{2,p}$
estimates under the mere assumption $a_{ij}\in L^{\infty}\cap VMO$, which
allows for some kind of discontinuities in the coefficients. Their technique
is based on representation formulas of $u_{x_{i}x_{j}}$ by means of singular
integrals with variable kernels, and commutators of these singular integrals
with $BMO$ functions. Thanks to a deep real analysis theorem by
Coifman-Rochberg-Weiss \cite{CRW}, these commutators have small operator norm
on small balls, hence the old idea of seeing a variable coefficient operator
as a small perturbation of the model operator with constant coefficients is
ingeniously generalized to an operator with possibly discontinuous
coefficients. This technique, by now classic, has been extended to several
contexts, for instance parabolic operators (see \cite{Bramanti-Cerutti}) and
nonvariational operators structured on H\"{o}rmander's vector fields (see
\cite{bb1}, \cite{BB2}).

In 2007 Krylov \cite{K1} introduced a differerent technique to prove similar
and more general results for elliptic and parabolic operators, based on the
\emph{pointwise} estimate of the \emph{sharp maximal function of }%
$u_{x_{i}x_{j}}$, that is $\left(  u_{x_{i}x_{j}}\right)  ^{\#}$. The idea is
then again that of approximating the operator with variable coefficients with
a model operator with constant coefficients; these constants in this case are
not simply the original coefficients \emph{frozen }at some point, but suitable
integral averages of these functions. The theory of singular integrals is not
\emph{explicitly} used, but it is replaced by Fefferman-Stein maximal theorem,
which allows to control the $L^{p}$ norm of $u_{x_{i}x_{j}}$ by that of
$\left(  u_{x_{i}x_{j}}\right)  ^{\#}$. On the other hand, throughout the
computation which is carried out on the model operator, many classical results
are employed, implicitly involving also the classical Calder\'{o}n-Zygmund theory.

The research started with this paper aims to investigate whether Krylov'
technique can be extended also to the context of linear degenerate equations
structured on H\"{o}rmander's vector fields, and if it can be used to get new
results not easily obtainable with the techniques previously used. We give a
partial positive answer to this question for the class of operators%
\[
Lu\equiv\sum_{i,j=1}^{q}a_{ij}(x)X_{i}X_{j}u
\]
where $X_{1},...,X_{q}$ are a system of left invariant and $1$-homogeneous
H\"{o}rmander's vector fields on a Carnot group in $\mathbb{R}^{n}$, the
matrix $\left\{  a_{ij}\right\}  $ is symmetric, uniformly positive on a
bounded domain $\Omega\subset\mathbb{R}^{n}$ with $a_{ij}$ bounded measurable
and (locally in $\Omega$) $VMO$, with respect to the balls induced by the
vector fields. The assumption of existence of an underlying Carnot group
structure such that $L$ is translation invariant and $2$-homogeneous is quite
natural in consideration of the important role of dilations in Krylov'
approach. In this context we prove a pointwise bound on the local sharp
maximal function of $X_{i}X_{j}u.$ This, combined with an extension of
Fefferman-Stein's theorem to the context of locally homogeneous spaces,
recently obtained, (see \cite{BF}) allows to get the local estimates first
proved by Bramanti-Brandolini in \cite{bb1} with an approach that parallels
that of Chiarenza-Frasca-Longo.

The result, therefore, is not original; the novelty lies in the approach,
which allows some simplification with respect to that of \cite{bb1}. We hope
to extend in the future the present approach to different classes of
degenerate operators, getting some kind of new $L^{p}$ estimate.

\bigskip

\noindent\textbf{Acknowledgement. }This research was carried out while Marisa
Toschi was visiting the Department of Mathematics of Politecnico di Milano,
which we want to thank for hospitality. This author was partially supported by
Universidad Nacional del Litoral through grants CAI+D 50020110100009.

\section{Preliminaries and known results}

\subsection{Carnot groups}

We start recalling some standard terminology and known facts about Carnot
groups. For more details and for the proofs of known results the reader is
referred to \cite[Chaps. 1, 2]{BLUbook}, \cite{fo}, \cite[Chap.XIII,
\S 5]{Stein}.

We call \emph{homogeneous group }the space $\mathbb{R}^{n}$ equipped with a
Lie group structure, together with a family of dilations that are group
automorphisms. Explicitly, assume that we are given a pair of mappings:
\[
\lbrack(x,y)\mapsto x\circ y]:\mathbb{R}^{n}\times\mathbb{R}^{n}%
\rightarrow\mathbb{R}^{n}\text{ and }\left[  x\mapsto x^{-1}\right]
:\mathbb{R}^{n}\rightarrow\mathbb{R}^{n}%
\]
that are smooth and such that $\mathbb{R}^{n}$, together with these mappings,
forms a group, for which the identity is the origin. We will think to the
operation $\circ$ as a \emph{translation}. Next, suppose that we are given an
$n$-tuple of strictly positive integers $\alpha_{1}\leq\alpha_{2}\leq
...\leq\alpha_{n}$, such that the \emph{dilations}%
\begin{equation}
D(\lambda):(x_{1},...,x_{n})\rightarrow(\lambda^{\alpha_{1}}x_{1}%
,...,\lambda^{\alpha_{n}}x_{n}) \label{dilations}%
\end{equation}
are group automorphisms, for all $\lambda>0$. We will denote by $\mathbb{G}$
the space $\mathbb{R}^{n}$ with this structure of homogeneous group, and we
will say that a constant depend on $\mathbb{G}$ if it depends on the numbers
$n$, $\alpha_{1},...,\alpha_{n}$ and the group law $\circ$.

We say that a differential operator $Y$ on $\mathbb{G\ }$is \emph{homogeneous
of degree }$\beta>0$ if
\[
Y\left(  f\left(  D(\lambda)x\right)  \right)  =\lambda^{\beta}(Yf)(D(\lambda
)(x))
\]
for every test function $f,\lambda>0$, $x\in\mathbb{R}^{n}$. Also, we say that
a function $f$ is homogeneous of degree $\alpha\in\mathbb{R}$ if
\[
f\left(  D(\lambda)x\right)  =\lambda^{\alpha}f(x)\text{ for every }%
\lambda>0,x\in\mathbb{R}^{n}\setminus\left\{  0\right\}  .
\]
Clearly, if $Y$ is a differential operator homogeneous of degree $\beta$ and
$f$ is a homogeneous function of degree $\alpha$, then $Yf$ is homogeneous of
degree $\alpha-\beta$.

We say that a differential operator $Y$ on $\mathbb{G}$ is \emph{left
invariant }if for every smooth function $f:\mathbb{G}\rightarrow\mathbb{R},$%
\begin{align*}
Y\left(  f\left(  L_{y}\left(  x\right)  \right)  \right)   &  =\left(
Yf\right)  \left(  y\circ x\right)  \text{ for every }x,y\in\mathbb{G},\\
\text{where }L_{y}\left(  x\right)   &  =y\circ x.
\end{align*}

Let us now consider the Lie algebra $\ell$ associated to the group
$\mathbb{G}$, that is, the Lie algebra of left invariant vector fields on
$\mathbb{G}$, endowed with the Lie bracket given by the commutator of vector
fields: $\left[  X,Y\right]  =XY-YX$. We can fix a basis $X_{1},...,X_{N}$ in
$\ell$ choosing $X_{i}$ as the (unique) left invariant vector field which
agrees with $\frac{\partial}{\partial x_{i}}$ at the origin. It turns out that
$X_{i}$ is homogeneous of degree $\alpha_{i}$. Then, we can extend the
dilations $D(\lambda)$ to $\ell$ setting
\[
D(\lambda)X_{i}=\lambda^{\alpha_{i}}X_{i}.
\]
$D\left(  \lambda\right)  $ becomes a Lie algebra automorphism, i.e.,
\[
D(\lambda)[X,Y]=[D(\lambda)X,D(\lambda)Y].
\]
In this sense, $\ell$ is said to be a \emph{homogeneous Lie algebra}; as a
consequence, $\ell$ is nilpotent.

We will assume that the first $q$ vector fields $X_{1},...,X_{q}$ are
$1$-homogeneous and generate $\ell$ as a Lie algebra. In other words,
$X_{1},...,X_{q}$ are a system of H\"{o}rmander's vector fields in
$\mathbb{R}^{n}$: there exists a positive integer $s$, called the step of the
Lie algebra, such that $X_{1},...,X_{q},$ together with their iterated
commutators of length $\leq s$ span $\mathbb{R}^{n}$ at every point. Under
these assumptions we say that $\ell$ is a stratified homogeneous Lie algebra
and that $\mathbb{G}$ is a stratified homogeneous group, or briefly a
\emph{Carnot group}.

As any system of H\"{o}rmander's vector fields, $X_{1},...,X_{q}$ induce in
$\mathbb{R}^{n}$ a distance $d$ called \emph{the control distance}. The
explicit definition of $d$ will never be used, hence we do not recall it (see
\cite{NSW}). Since $\mathbb{G}$ is a Carnot group, $d$ turns out to be left
invariant and $1$-homogeneous, that is%
\begin{align*}
d\left(  x,y\right)   &  =d\left(  z\circ x,z\circ y\right) \\
d\left(  D\left(  \lambda\right)  x,D\left(  \lambda\right)  y\right)   &
=\lambda d\left(  x,y\right)
\end{align*}
for any $x,y,z\in\mathbb{G}$ and $\lambda>0.$ Then, if we set%
\[
\Vert x\Vert=d\left(  x,0\right)  \text{,}%
\]
it turns out that $\Vert\cdot\Vert$ is a \emph{homogeneous norm}, satisfying
the following properties:

\begin{itemize}
\item[(i)] $\Vert D(\lambda)x\Vert=\lambda\Vert x\Vert$ for every
$x\in\mathbb{R}^{n},\lambda>0$;

\item[(ii)] the function $x\mapsto\Vert x\Vert$ is continuous;

\item[(iii)] for every $x,y\in\mathbb{R}^{n}$
\[
\left\Vert x\circ y\right\Vert \leq\left\Vert x\right\Vert +\left\Vert
y\right\Vert \text{ and }\left\Vert x^{-1}\right\Vert =\Vert x\Vert;
\]

\item[(iv)] there exists a constant $c\geq1$ such that%
\[
\frac{1}{c}\left\vert y\right\vert \leq\left\Vert y\right\Vert \leq
c\left\vert y\right\vert ^{1/s}\text{ if }\Vert y\Vert\leq1,
\]
where $s$ is the step of the Lie algebra.
\end{itemize}

Note that from (iii) we have that%
\begin{equation}
\Vert y^{-1}\circ x\Vert\geq\Vert y\Vert-\Vert x\Vert.
\label{inequality of the distance}%
\end{equation}

We also define the balls with respect to $d$ as
\[
B(x,r)\equiv B_{r}(x)\equiv\{y\in\mathbb{R}^{n}:d(x,y)<r\},
\]
and denote $B_{r}=B(0,r)$.

Note that $B(0,r)=D(r)B(0,1)$. It can be proved that the Lebesgue measure in
$\mathbb{R}^{n}$ is the Haar measure of $\mathbb{G}$ and%
\begin{equation}
\left\vert B(x,r)\right\vert =\left\vert B(0,1)\right\vert r^{Q},
\label{vol balls}%
\end{equation}
for every $x\in\mathbb{R}^{n}$ and $r>0$, where
\[
Q=\alpha_{1}+...+\alpha_{n}%
\]
with $\alpha_{i}$ as in (\ref{dilations}). We will call $Q$ the
\emph{homogeneous dimension }of $\mathbb{G}$.

\subsection{Real analysis tools}

We start noting that (\ref{vol balls}) in particular implies that the Lebesgue
measure $dx$ is a \emph{doubling measure }with respect to $d$, and therefore
$(\mathbb{R}^{n},d,dx)$ is a \emph{space of homogenous type }in the sense of
Coifman-Weiss (see \cite{Coifman-Weiss}).

In this context, for a given locally integrable function $f$, the
Hardy-Littlewood maximal operator is given by
\begin{equation}
Mf(x)=\sup_{B\ni x}\frac{1}{|B|}\int_{B}|f(y)|dy, \label{maximal}%
\end{equation}
where the supremum is taken over all the $d$-balls (containing the point $x$).
By the general theory of spaces of homogeneous type, it is known that for
every $p\in\left(  1,\infty\right)  $ there exists a constant $c>0$ such that%
\begin{equation}
\Vert Mf\Vert_{L^{p}(\mathbb{R}^{n})}\leq c \Vert f\Vert_{L^{p}(\mathbb{R}%
^{n})}. \label{boundedness of the maximal}%
\end{equation}

Since we will study a differential operator defined on a bounded domain
$\Omega\subset\mathbb{R}^{n}$ and we will prove interior estimates in $\Omega
$, a natural framework for the real analysis tools we need is that of
\emph{locally homogeneous spaces}, as developed in \cite{BZ} and \cite{BF}. We
are going to introduce the minimum amount of definitions in order to apply
this abstract theory in our concrete context. So, for a fixed bounded domain
$\Omega\subset\mathbb{R}^{n}$, fix a strictly increasing sequence
$\{\Omega_{m}\}_{m=1}^{\infty}$ of bounded domains such that
\[
\bigcup_{m=1}^{\infty}\Omega_{m}=\Omega
\]
and such that for any $m$ there exists $\varepsilon_{m}>0$ such that
\[
\{x\in\Omega:d(x,y)<2\varepsilon_{m}\text{ for some }y\in\Omega_{m}%
\}\subset\Omega_{m+1}%
\]
where $d$ is, as above, the distance induced in $\mathbb{R}^{n}$ by the vector
fields $X_{i}$. Then $\left(  \Omega,\{\Omega_{m}\}_{m=1}^{\infty
},d,dx\right)  $ (where $dx$ stands for the Lebesgue measure) is a locally
homogeneous space in the sense of \cite{BZ}.

With respect to this structure, we can define the \emph{local sharp maximal
operator}: for any function $f\in L_{loc}^{1}\left(  \Omega_{m+1}\right)  $
and $x\in\Omega_{m}$, let%
\begin{equation}
f_{\Omega_{m},\Omega_{m+1}}^{\#}\left(  x\right)  =\sup_{\substack{B\left(
\overline{x},r\right)  \ni x\\\overline{x}\in\Omega_{m},r\leq\varepsilon_{m}%
}}\frac{1}{\left\vert B\left(  \overline{x},r\right)  \right\vert }%
\int_{B\left(  \overline{x},r\right)  }\left\vert f\left(  y\right)
-f_{B\left(  \overline{x},r\right)  }\right\vert dy. \label{sharp maximal}%
\end{equation}

Note that the supremum is taken over all the $d$-balls containing the point
$x\in\Omega_{m}$ and having radius small enough so that the ball itself is
contained in the larger set $\Omega_{m+1}$ where the function $f$ is defined.
Thus, we focus on the behavior of $f$ on a bounded domain but on the other
hand avoid the necessity of integrating over \emph{restricted balls }$B\left(
\overline{x},r\right)  \cap\Omega_{m+1}$. The continuity of the sharp maximal
operator is contained in the next result:

\begin{theorem}
[{Local Fefferman-Stein inequality, see \cite[Corollary 3.9]{BF}}%
]\label{Thm Fefferman Stein}There exists $\delta\in(0,1)$ such that for any
$m$ and for every integer $k$ large enough, the set $\Omega_{m}$ can be
covered by a finite union of balls $B_{R}$ of radii comparable to $\delta^{k}%
$, such that for any such ball $B_{R}$ and every $f$ supported in $B_{R}$,
with $f\in L^{1}\left(  B_{R}\right)  $, $\int_{B_{R}}f=0$, and $f_{\Omega
_{m+2},\Omega_{m+3}}^{\#}\in L_{loc}^{p}\left(  \Omega_{m+1}\right)  $ for
some $p\in\lbrack1,\infty)$ one has%
\[
\left\Vert f\right\Vert _{L^{p}\left(  B_{R}\right)  }\leq c\left\Vert
f_{\Omega_{m+2},\Omega_{m+3}}^{\#}\right\Vert _{L^{p}\left(  B_{\gamma
R}\right)  }%
\]
with $\gamma>1$ absolute constant and $c$ only depending on $p$, the sets
$\Omega_{k}$ and the constants $\varepsilon_{k}$ for a finite number of
indices $k$.
\end{theorem}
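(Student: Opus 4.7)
The plan is to adapt the classical Fefferman--Stein argument to the locally homogeneous setting. The target inequality $\|f\|_{L^p(B_R)}\leq c\|f^{\#}_{\Omega_{m+2},\Omega_{m+3}}\|_{L^p(B_{\gamma R})}$ will follow from a good-$\lambda$ inequality between a suitably localized Hardy--Littlewood maximal operator $M$ and the local sharp maximal operator, together with the trivial bound $|f|\leq Mf$ a.e. So the problem reduces to proving, for some absolute constants $K>1$ and $\alpha>0$ and every $\beta\in(0,1)$, an estimate of the form
\[
\bigl|\{x\in B_R:Mf(x)>K\lambda,\ f^{\#}_{\Omega_{m+2},\Omega_{m+3}}(x)\leq\beta\lambda\}\bigr|\leq c\,\beta^{\alpha}\bigl|\{x\in B_{\gamma R}:Mf(x)>\lambda\}\bigr|
\]
for all $\lambda>\lambda_0:=|B_R|^{-1}\int_{B_R}|f|$, and then integrating the layer-cake formula in $\lambda$.

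First I would fix the covering geometry. Using the nested property $\{x\in\Omega:d(x,y)<2\varepsilon_m\text{ for some }y\in\Omega_m\}\subset\Omega_{m+1}$ together with the doubling of $|B(x,r)|=|B(0,1)|r^Q$, I would cover $\Omega_m$ by finitely many balls $B_R$ of radius comparable to $\delta^k$ chosen so small that the dilated ball $B_{\gamma R}$ and the full collection of descendants produced by the stopping-time procedure below remain inside $\Omega_{m+1}$, while enlargements by a further fixed doubling factor stay inside $\Omega_{m+2}$ and $\Omega_{m+3}$. This is where the buffer $\varepsilon_m$ is crucial, and fixes the meaning of ``$k$ large enough''.

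Next, for $\lambda>\lambda_0$, since $f$ has mean zero on $B_R$ we have $|B_R|^{-1}\int_{B_R}|f|\leq\lambda$, so a Calder\'on--Zygmund stopping-time decomposition starting from $B_R$ produces a disjoint family of balls $\{B_j\}$ with $\lambda<|B_j|^{-1}\int_{B_j}|f|\leq c\lambda$, whose union covers $\{Mf>\lambda\}\cap B_R$ up to null sets. On each $B_j$ I split $f=(f-f_{B_j})+f_{B_j}$: the constant term satisfies $|f_{B_j}|\leq c\lambda$, while the oscillating term has $L^1$-mean $|B_j|^{-1}\int_{B_j}|f-f_{B_j}|\leq f^{\#}_{\Omega_{m+2},\Omega_{m+3}}(x)$ for any $x\in B_j$ (since the radius of $B_j$ is below $\varepsilon_m$ and $B_j\subset\Omega_{m+1}\subset\Omega_{m+3}$, by construction). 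If some point $x\in B_j$ has $f^{\#}(x)\leq\beta\lambda$, then the weak-$(1,1)$ bound for the maximal operator gives
\[
\bigl|\{y\in B_j:M(f-f_{B_j})(y)>(K-c)\lambda\}\bigr|\leq\frac{c'}{(K-c)\lambda}\int_{B_j}|f-f_{B_j}|\leq c''\beta\,|B_j|,
\]
and picking $K$ large makes the $f_{B_j}$ contribution disappear. Summing over $j$ yields the good-$\lambda$ inequality with $\alpha=1$, because $\sum_j|B_j|\leq|\{Mf>\lambda\}\cap B_{\gamma R}|$.

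The main obstacle is bookkeeping of the geometry rather than the real-variable machinery: one must pick the radius $\delta^k$ and the enlargement factor $\gamma$ in lockstep so that (i) every ball of the initial cover, every Calder\'on--Zygmund descendant, and every enlarged descendant needed to apply weak-$(1,1)$ or to invoke the definition of $f^{\#}_{\Omega_{m+2},\Omega_{m+3}}$ actually sits inside the correct layer $\Omega_{m+1},\Omega_{m+2},\Omega_{m+3}$; and (ii) the localized Hardy--Littlewood maximal operator used to detect the set $\{Mf>\lambda\}$ is majorized by the restriction of the global operator. Once these geometric choices are made, the classical integration $\int_0^\infty p\lambda^{p-1}|\{Mf>\lambda\}|\,d\lambda$ combined with the good-$\lambda$ estimate absorbs an $\|Mf\|_{L^p}$ term on the left and leaves $c\|f^{\#}\|_{L^p(B_{\gamma R})}$ on the right, completing the proof.
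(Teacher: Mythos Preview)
The paper does not prove this theorem at all: it is quoted verbatim as \cite[Corollary 3.9]{BF} and used as a black box. So there is no ``paper's own proof'' to compare against; the authors simply import the result from the companion paper on locally homogeneous spaces.

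Your sketch follows the classical Fefferman--Stein good-$\lambda$ route, which is indeed the strategy behind the result in \cite{BF}, so the overall plan is sound. Two points deserve more care than you give them. First, in a space of homogeneous type a Calder\'on--Zygmund stopping-time argument does not naturally produce a \emph{disjoint} family of balls; one works instead with Christ-type dyadic cubes (or, in the locally homogeneous framework of \cite{BZ}, with their local analogue), and the covering of $\Omega_m$ by balls $B_R$ of radii $\sim\delta^k$ in the statement is precisely tied to the scale hierarchy of that dyadic grid. Your argument glosses over this. Second, the step ``if $Mf(y)>K\lambda$ for $y\in B_j$ then $M((f-f_{B_j})\chi_{B_j})(y)>(K-c)\lambda$'' requires controlling the contribution of $f$ outside $B_j$; in the standard proof this uses that the parent of $B_j$ has average $\leq\lambda$ together with a localization of the maximal operator, and in the present local setting one must check that all the balls involved stay in the right $\Omega_{m+j}$. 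You flag this as ``bookkeeping'' but it is exactly the content of the theorem in \cite{BF}: making the classical argument go through when only a nested family of domains, not a global doubling space, is available. With those two points fleshed out your outline would constitute a genuine proof along the lines of \cite{BF}.
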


Let us also define the \emph{local VMO spaces}.

For a fixed $\Omega_{m}$, $f\in L_{loc}^{1}\left(  \Omega_{m+1}\right)  $ and
$0<r\leq\varepsilon_{m}$, let%
\[
\eta_{m,f}\left(  r\right)  =\sup_{x\in\Omega_{m},\rho\leq r}\frac
{1}{\left\vert B\left(  x,\rho\right)  \right\vert }\int_{B\left(
x,\rho\right)  }\left\vert f\left(  y\right)  -f_{B\left(  x,\rho\right)
}\right\vert dy.
\]
We say that $f\in VMO_{loc}\left(  \Omega_{m},\Omega_{m+1}\right)  $ if
$\eta_{m,f}\left(  r\right)  \rightarrow0$ for $r\rightarrow0^{+}$.

We say that a function $f\in L_{loc}^{1}\left(  \Omega\right)  $ belongs to
$VMO_{loc}\left(  \Omega\right)  $ if%
\[
\eta_{f}\left(  r\right)  \equiv\sup_{x\in\Omega,\rho\leq r,B\left(
x,\rho\right)  \Subset\Omega}\frac{1}{\left\vert B\left(  x,\rho\right)
\right\vert }\int_{B\left(  x,\rho\right)  }\left\vert f\left(  y\right)
-f_{B\left(  x,\rho\right)  }\right\vert dy\rightarrow0\text{ for
}r\rightarrow0^{+}.
\]

Note that the requirement $B\left(  x,\rho\right)  \Subset\Omega$ is
meaningful because the distance $d$ is define in the whole $\mathbb{R}^{n}$,
not only in $\Omega$. Observe that%
\begin{equation}
VMO_{loc}\left(  \Omega\right)  \subset\bigcap\limits_{m=1}^{\infty}%
VMO_{loc}\left(  \Omega_{m},\Omega_{m+1}\right)  . \label{VMO inclusion}%
\end{equation}

\subsection{Sobolev spaces and fundamental solutions}

Let us introduce some useful notation. For $X_{1},...,X_{q}$ the vector fields
as above and any multiindex $I=\left(  i_{1},...,i_{k}\right)  $ with
$i_{j}\in\left\{  1,2,...,q\right\}  $ we set%
\[
X_{I}u=X_{i_{1}}X_{i_{2}}...X_{i_{k}}u,\text{ \ }\left\vert I\right\vert =k.
\]
We then define, for any positive integer $k$,%
\[
D^{k}u\equiv\sum_{\left\vert I\right\vert =k}\left\vert X_{I}u\right\vert .
\]
(We will write $Du$ instead of $D^{1}u$). Here the $X_{i}$-derivatives are
meant in classical or weak sense. For $\Omega$ a domain in $\mathbb{R}^{n}$
and $p\in\lbrack1,\infty]$ the space $W_{X}^{k,p}(\Omega)$ will consist of all
$L^{p}(\Omega)$ functions such that
\[
\Vert u\Vert_{W_{X}^{k,p}(\Omega)}=\sum_{h=0}^{k}\Vert D^{h}u\Vert
_{L^{p}(\Omega)}%
\]
is finite (with $\Vert D^{0}u\Vert_{L^{p}(\Omega)}=\Vert u\Vert_{L^{p}%
(\Omega)}$). We shall also denote by $W_{X,0}^{k,p}(\Omega)$ the closure of
$C_{0}^{\infty}(\Omega)$ in $W_{X}^{k,p}(\Omega)$. Note that the fields
$X_{i}$, and therefore the definition of the above norms and spaces, are
completely determined by the structure of $\mathbb{G}$.

A couple of standard facts about these Sobolev spaces on Carnot groups are the following:

\begin{theorem}
[Poincar\'{e}'s inequality on stratified groups, see \cite{Jer}]%
\label{poincare}Let $\mathbb{G}$ be a Carnot group with generators
$X_{1},...,X_{q}$. For every $p\in\lbrack1,\infty)$ there exist constants
$c>0,\Lambda>1$ such that for any ball $B=B\left(  x_{0},r\right)  $ and any
$u\in C^{1}\left(  \overline{\Lambda B}\right)  $ (with $\Lambda B=B\left(
x_{0},\Lambda r\right)  $) we have:%
\[
\left(  \frac{1}{\left\vert B\right\vert }\int_{B}\left\vert u\left(
x\right)  -u_{B}\right\vert ^{p}dx\right)  ^{1/p}\leq cr\left(  \frac
{1}{\left\vert \Lambda B\right\vert }\int_{\Lambda B}\left\vert Du\left(
x\right)  \right\vert ^{p}dx\right)  ^{1/p}.
\]

\end{theorem}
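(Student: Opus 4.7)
The plan is to reduce to a normalized ball, establish a pointwise subrepresentation formula for $u(x)-u_B$ in terms of $|Du|$, and then invoke the $L^p$-boundedness of a fractional integral operator on $(\mathbb{R}^n,d,dx)$.

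First I would exploit the group structure to reduce to the case $B=B_1=B(0,1)$. By left-invariance of $d$ and of each $X_i$, composing $u$ with the left translation $L_{x_0}$ eliminates the center. By the $1$-homogeneity of the fields (so $X_i(u\circ D(r))=r\,(X_iu)\circ D(r)$) and the change of variable $y=D(r)\tilde y$ together with \eqref{vol balls}, rescaling $u\mapsto u\circ D(r)$ reduces the statement to the case $r=1$, with the factor $r$ on the right-hand side appearing precisely from the chain rule and the scaling of $|B|$. So it suffices to show
\[
\int_{B_1}|u(x)-u_{B_1}|^p\,dx\le c\int_{\Lambda B_1}|Du(x)|^p\,dx.
\]

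Next I would aim for the subrepresentation inequality
\[
|u(x)-u_{B_1}|\le c\int_{\Lambda B_1}\frac{|Du(y)|}{d(x,y)^{Q-1}}\,dy\qquad\text{for a.e. }x\in B_1.
\]
The cleanest route is via the fundamental solution $\Gamma$ of the canonical sub-Laplacian $\mathcal L=-\sum_{i=1}^q X_i^2$ on $\mathbb G$: $\Gamma$ is smooth off the diagonal, left-invariant, and $D(\lambda)$-homogeneous of degree $2-Q$, so each $X_i\Gamma$ is homogeneous of degree $1-Q$ and satisfies $|X_i\Gamma(x,y)|\le c\,d(x,y)^{1-Q}$. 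Fixing a cutoff $\varphi$ supported in $\Lambda B_1$ and equal to $1$ on a neighbourhood of $B_1$, Green's identity combined with $\mathcal L\Gamma=\delta$ gives a representation of $\varphi u$ as an integral of $X_j\Gamma$ against $X_ju$ plus terms involving $Du$ against derivatives of $\varphi$; subtracting the corresponding identity integrated over $B_1$ (to produce $u_{B_1}$) yields the pointwise bound. An alternative, more geometric, argument would be to connect any $x,y\in B_1$ by a sub-unit horizontal curve of length $\lesssim d(x,y)$, integrate $Du$ along it, and then average in $y$ over $B_1$; this gives the same kernel estimate via Franchi–Serapioni–Serra Cassano type chaining.

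Finally, the $L^p$ estimate follows from the $L^p$-boundedness of the fractional integral
\[
I_1f(x)=\int_{\Lambda B_1}\frac{f(y)}{d(x,y)^{Q-1}}\,dy,
\]
which is a standard result on the space of homogeneous type $(\mathbb R^n,d,dx)$ of homogeneous dimension $Q$: Hedberg's pointwise bound gives $|I_1f(x)|\le c\,|\Lambda B_1|^{1/Q}(Mf(x))^{1-1/p}\|f\|_{L^p}^{1/p}$, and combining this with \eqref{boundedness of the maximal} yields $\|I_1f\|_{L^p(B_1)}\le c\|f\|_{L^p(\Lambda B_1)}$. Applying this to $f=|Du|$ gives the result.

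The main obstacle is the subrepresentation formula: it is essentially equivalent to the inequality itself, and getting the correct enlargement factor $\Lambda$ requires either a careful localization of Green's identity (controlling the cutoff terms, which involves $X_i$-derivatives of $\varphi$ supported in $\Lambda B_1\setminus B_1$) or, in the chaining approach, a non-trivial construction of horizontal paths staying inside $\Lambda B_1$. Everything else — the reductions, the fractional integral bound, and Hedberg's trick — is routine given the doubling property \eqref{vol balls} and the weak $(1,1)$ estimate for $M$.
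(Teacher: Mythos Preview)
The paper does not prove this theorem; it is quoted as a known preliminary from Jerison \cite{Jer}, with no argument given, so there is no proof in the paper to compare against.

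On the merits of your sketch: the reduction to $B_{1}$ by left translation and dilation is correct, and once the subrepresentation inequality is in hand the $L^{p}$ bound is immediate --- though not via the Hedberg estimate you wrote (those exponents are off, and Hedberg is aimed at $L^{p}\to L^{q}$ embeddings). What you actually need is simpler: on the bounded set $\Lambda B_{1}$ one has $\sup_{x}\int_{\Lambda B_{1}}d(x,y)^{1-Q}\,dy<\infty$, so Young's (Schur's) inequality for integral operators gives $\|I_{1}f\|_{L^{p}(B_{1})}\le c\|f\|_{L^{p}(\Lambda B_{1})}$ for every $p\in[1,\infty]$, with no maximal function needed.

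The genuine gap is in your derivation of the subrepresentation formula via Green's identity. Writing $\varphi u=\Gamma\ast\mathcal L(\varphi u)$ and integrating by parts once produces, besides the desired terms $\int (X_{i}\Gamma)(y^{-1}\circ x)\,X_{i}u(y)\,dy$, a term
\[
\int \Gamma(y^{-1}\circ x)\,u(y)\,\mathcal L\varphi(y)\,dy
\]
supported on the annulus $\Lambda B_{1}\setminus B_{1}$ and involving $u$ itself, not $Du$. Subtracting the $B_{1}$-average in $x$ does \emph{not} eliminate it: you are left with $\int[\Gamma(y^{-1}\circ x)-\overline\Gamma(y)]\,u(y)\,\mathcal L\varphi(y)\,dy$, which is controlled only by $\|u\|_{L^{1}(\Lambda B_{1}\setminus B_{1})}$, and bounding that by $\|Du\|$ is precisely a Poincar\'{e} inequality on the annulus --- circular. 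Closing this requires either a more careful representation formula or the horizontal-path averaging/chaining argument you mention as an alternative; the latter is in fact the standard route in this setting and is where the enlargement constant $\Lambda$ genuinely enters.
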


Note that the constants $c,\Lambda$ in the previous Poincar\'{e}'s inequality
are independent of $r$ and $x_{0}$.

\begin{proposition}
[{Interpolation inequality, see \cite[Prop. 4.1]{bb1}}]%
\label{interpolation inequality}Let $X$ be a left invariant vector field
homogeneous of degree $1$. Then for every $\varepsilon>0$ and $u\in
W_{X,0}^{2,p}(\mathbb{R}^{n})$ with $p\in\lbrack1,\infty)$,
\[
\Vert Xu\Vert_{L^{p}}\leq\varepsilon\Vert X^{2}u\Vert_{L^{p}}+\frac
{2}{\varepsilon}\Vert u\Vert_{L^{p}}.
\]

\end{proposition}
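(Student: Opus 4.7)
By density of $C_{0}^{\infty}(\mathbb{R}^{n})$ in $W_{X,0}^{2,p}(\mathbb{R}^{n})$, I can assume $u\in C_{0}^{\infty}(\mathbb{R}^{n})$. The idea is to reduce the estimate to its one-dimensional counterpart by integrating along the integral curves of $X$. Since $X$ is left invariant on $\mathbb{G}$, its flow $\Phi_{t}$ is right multiplication by a fixed element of $\mathbb{G}$; as $\mathbb{G}$ is a Carnot group (hence nilpotent, hence unimodular) with Lebesgue measure as its Haar measure, each $\Phi_{t}$ preserves Lebesgue measure, so $\Vert v\circ\Phi_{t}\Vert_{L^{p}(\mathbb{R}^{n})}=\Vert v\Vert_{L^{p}(\mathbb{R}^{n})}$ for every $v\in L^{p}(\mathbb{R}^{n})$ and every $t\in\mathbb{R}$.

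Fix $x\in\mathbb{R}^{n}$ and set $\phi(t)=u(\Phi_{t}(x))$, so that $\phi'(t)=(Xu)(\Phi_{t}(x))$ and $\phi''(t)=(X^{2}u)(\Phi_{t}(x))$. Taylor's formula with integral remainder at $t=0$ reads, for every $h>0$,
\[
h\,\phi'(0)=\phi(h)-\phi(0)-\int_{0}^{h}(h-s)\,\phi''(s)\,ds.
\]
Passing to absolute values, taking the $L^{p}$ norm in $x$, and applying Minkowski's integral inequality to the remainder term, then using the measure-preserving property of $\Phi_{t}$, I would obtain
\[
h\,\Vert Xu\Vert_{L^{p}(\mathbb{R}^{n})}\leq 2\Vert u\Vert_{L^{p}(\mathbb{R}^{n})}+\frac{h^{2}}{2}\Vert X^{2}u\Vert_{L^{p}(\mathbb{R}^{n})}.
\]
Dividing by $h$ and choosing $h=\varepsilon$ gives
\[
\Vert Xu\Vert_{L^{p}}\leq\frac{\varepsilon}{2}\Vert X^{2}u\Vert_{L^{p}}+\frac{2}{\varepsilon}\Vert u\Vert_{L^{p}},
\]
which is even slightly stronger than the claimed inequality.

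The only nontrivial ingredient is the measure-preserving character of the flow of $X$, which is where the Carnot group structure really enters; the rest is a routine one-dimensional calculation. Notice that the $1$-homogeneity of $X$ is not needed for this particular inequality (only left invariance is), although of course it is essential elsewhere in the paper. Moreover, the argument is insensitive to the value of $p\in[1,\infty)$: no duality or Hardy--Littlewood--type estimate is used, so the constant $2/\varepsilon$ is independent of $p$, matching the statement.
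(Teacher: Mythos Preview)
The paper does not actually include a proof of this proposition: it is simply quoted from \cite[Prop.~4.1]{bb1} as a known result. Your argument is correct and is in fact the standard one used in that reference: reduce to $C_{0}^{\infty}$, write Taylor's formula along the integral curves of $X$, and exploit that the flow of a left-invariant vector field on a Carnot group is right translation by $\exp(tX)$, hence measure-preserving because nilpotent groups are unimodular. One small wording point: the flow $\Phi_{t}$ is right multiplication by $\exp(tX)$, which depends on $t$ (a one-parameter subgroup), not by a single fixed element; what matters is that each $\Phi_{t}$ is a right translation and therefore preserves Lebesgue measure. Your observation that the $1$-homogeneity of $X$ plays no role here is also correct---only left invariance is used---and the resulting bound $\Vert Xu\Vert_{L^{p}}\leq\frac{\varepsilon}{2}\Vert X^{2}u\Vert_{L^{p}}+\frac{2}{\varepsilon}\Vert u\Vert_{L^{p}}$ is indeed slightly sharper than what is stated.
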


Let us now consider the class of \emph{model operators}%
\begin{equation}
\overline{L}u(x)=\sum_{i,j=1}^{q}\overline{a}_{ij}X_{i}X_{j}u(x)
\label{constant coefficient operator}%
\end{equation}
where the matrix $\{\overline{{a}}_{ij}\}$ is constant, symmetric and
satisfies the ellipticity condition: there exists $\mu>0$ such that
\begin{equation}
\mu|\xi|^{2}\leq\overline{a}_{ij}\xi_{i}\xi_{j}\leq\frac{1}{\mu}|\xi|^{2}
\label{ellipticity}%
\end{equation}
for every $\xi\in\mathbb{R}^{q}$.

The operator $\overline{L}$ is a left invariant differential operator
homogeneous of degree two on $\mathbb{G}$; it is easy to see that
$\overline{L}$ can be rewritten in the form $\overline{L}=\sum_{i=1}^{q}%
Y_{i}^{2}$ where $Y_{1},...,Y_{q}$ are a different system of H\"{o}rmander'
vector fields (for details, see \cite[\S 2.4]{bb1}); hence $\overline{L}$ is
hypoelliptic, by H\"{o}rmander's theorem (see \cite{H}). By general properties
of Carnot groups, the formal transposed of $X_{i}$ is $X_{i}^{\ast}=-X_{i}$;
hence the transposed of $\overline{L}$ is still $\overline{L}$; in particular,
both $\overline{L}$ and $\overline{L}^{\ast}$ are hypoelliptic. We can
therefore apply the theory developed by Folland \cite{fo} about the
fundamental solution of $\overline{L}$. The following theorem collects the
properties we will need:

\begin{theorem}
[Homogeneous fundamental solution of $\overline{L}$]\label{Thm Folland}The
operator $\overline{L}$ has a unique global fundamental solution
$\Gamma_{\overline{a}}\leq0$ with pole at the origin wich is homogeneous of
degree $2-Q$ and such that:

\begin{enumerate}
\item[(a)] $\Gamma_{\overline{a}}\in C^{\infty}(\mathbb{R}^{n}\setminus\{0\})$;

\item[(b)] for every $u\in C_{0}^{\infty}\left(  \mathbb{R}^{n}\right)  $ and
every $x\in\mathbb{R}^{n}$,
\[
u(x)=\overline{L}u\ast\Gamma_{\overline{a}}(x)=\int_{\mathbb{R}^{n}}%
\Gamma_{\overline{a}}(y^{-1}\circ x)\overline{L}u(y)dy;
\]

\item[(c)] for every $f\in L^{2}\left(  \mathbb{R}^{n}\right)  ,f$ compactly
supported, the function%
\[
u(x)=f\ast\Gamma_{\overline{a}}(x)=\int_{\mathbb{R}^{n}}\Gamma_{\overline{a}%
}(y^{-1}\circ x)f(y)dy
\]
belongs to $W_{X}^{2,2}\left(  \mathbb{R}^{n}\right)  $ and solves the
equation $\overline{L}u=f$ in $\mathbb{R}^{n}$.
\end{enumerate}
\end{theorem}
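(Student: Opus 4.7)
The plan is to verify the hypotheses of Folland's general construction of homogeneous fundamental solutions on stratified nilpotent Lie groups and to translate his conclusions into the statements (a)--(c).

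First, I would rewrite $\overline{L}$ as a sum of squares of left-invariant $1$-homogeneous vector fields. Since the constant matrix $\{\overline{a}_{ij}\}$ is symmetric and positive definite by (\ref{ellipticity}), we may write $\overline{a} = B^{T}B$ for a nonsingular $q \times q$ matrix $B$; setting $Y_{k} = \sum_{i} B_{ki} X_{i}$, a direct computation gives $\overline{L} = \sum_{k=1}^{q} Y_{k}^{2}$. The $Y_{k}$'s span the same $q$-dimensional subspace of $\ell$ as the $X_{k}$'s, hence still generate $\ell$ as a Lie algebra and inherit left invariance and $1$-homogeneity. Consequently $\overline{L}$ is a sub-Laplacian on $\mathbb{G}$, hypoelliptic by H\"{o}rmander's theorem; and since $X_{i}^{\ast} = -X_{i}$ together with the symmetry of $\{\overline{a}_{ij}\}$ give $\overline{L}^{\ast} = \overline{L}$, the transpose is hypoelliptic as well. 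This places $\overline{L}$ exactly in the class for which Folland \cite{fo} constructs a unique homogeneous fundamental solution $\Gamma_{\overline{a}}$ of degree $2-Q$, smooth off the origin, yielding (a) and the uniqueness. The sign $\Gamma_{\overline{a}} \leq 0$ then follows from the strong maximum principle for sub-Laplacians on Carnot groups applied to the negatively homogeneous $\Gamma_{\overline{a}}$, which vanishes at infinity and blows up at the pole.

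Formula (b) is a consequence of the distributional identity $\overline{L}\Gamma_{\overline{a}} = \delta_{0}$ combined with left invariance: for $u \in C_{0}^{\infty}(\mathbb{R}^{n})$, integration by parts against $\Gamma_{\overline{a}}(y^{-1} \circ x)$, legitimate because $\overline{L}^{\ast} = \overline{L}$ acts on the $y$-variable by left translation, gives the stated representation. For (c), the second-order derivatives $X_{i}X_{j}\Gamma_{\overline{a}}$ are homogeneous of degree $-Q$ and smooth off the origin, and from $\overline{L}\Gamma_{\overline{a}} = \delta_{0}$ one extracts the cancellation condition needed to make them Calder\'{o}n--Zygmund convolution kernels on $\mathbb{G}$. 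The theory of such kernels on homogeneous groups, also developed in \cite{fo}, gives their $L^{2}$-boundedness, so $u = f \ast \Gamma_{\overline{a}} \in W_{X}^{2,2}(\mathbb{R}^{n})$; differentiating under the convolution and applying the fundamental solution identity yields $\overline{L}u = f$ in $\mathbb{R}^{n}$.

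The statement is essentially a repackaging of classical results from \cite{fo}, so no serious obstacle arises. The only technical point that deserves care is the $L^{2}$-boundedness in (c): one must verify the mean-zero condition for the kernels $X_{i}X_{j}\Gamma_{\overline{a}}$ on the unit sphere of the homogeneous norm, or equivalently invoke the singular-integral theory of convolution operators on homogeneous groups in the precise form suited to kernels of critical homogeneity $-Q$.
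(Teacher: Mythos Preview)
Your proposal is correct and matches the paper's own treatment: the paper does not give a proof of this theorem but simply observes, in the paragraph preceding the statement, that $\overline{L}$ can be rewritten as a sum of squares $\sum Y_i^2$ (hence is hypoelliptic by H\"ormander's theorem) and that $\overline{L}^\ast=\overline{L}$, then attributes all of (a)--(c) to Folland \cite{fo}. Your sketch fills in exactly these verifications and adds a bit more detail on the sign condition and on part (c), but the approach is the same.
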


We also need some uniform bound for $\Gamma_{\overline{a}}$, with respect to
the constant matrix $\{\overline{a}_{ij}\}$ in a fixed ellipticity class. The
next result is contained in \cite[Thm. 12]{bb1}:

\begin{proposition}
[Uniform estimate on $\Gamma_{\overline{a}}$]%
\label{bounded not depending on the coeficients}There exists a positive
constant, depending on $\{\overline{a}_{ij}\}$ only through the number $\mu$,
such that
\[
\left\vert \Gamma_{\overline{a}}(x)\right\vert \leq\frac{c}{\left\Vert
x\right\Vert ^{Q-2}}\text{ for every }x\in\mathbb{R}^{n}\setminus\left\{
0\right\}  \text{.}%
\]

\end{proposition}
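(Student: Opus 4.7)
The plan is to combine the $(2-Q)$-homogeneity of $\Gamma_{\overline{a}}$ with a compactness argument on the ellipticity class. The correct power of $\Vert x \Vert$ is forced by homogeneity alone; only the uniform dependence of the constant on $\{\overline{a}_{ij}\}$ requires real work.

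First I would carry out a homogeneity reduction. For $x \neq 0$, set $\lambda = \Vert x \Vert$ and $x_{0} = D(1/\lambda)x$; property (i) of the homogeneous norm gives $\Vert x_{0} \Vert = 1$. Since $\Gamma_{\overline{a}}$ is homogeneous of degree $2-Q$ by Theorem \ref{Thm Folland},
\[
\Gamma_{\overline{a}}(x) = \lambda^{2-Q}\, \Gamma_{\overline{a}}(x_{0}) = \Vert x \Vert^{2-Q}\, \Gamma_{\overline{a}}(x_{0}).
\]
Thus it suffices to bound $|\Gamma_{\overline{a}}(y)|$ uniformly for $y$ on $\Sigma = \{y : \Vert y \Vert = 1\}$ and for $\{\overline{a}_{ij}\}$ in the set $\mathcal{E}_{\mu}$ of symmetric matrices satisfying \eqref{ellipticity}. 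Both $\Sigma$ (by properties (ii), (iv) of $\Vert \cdot \Vert$) and $\mathcal{E}_{\mu}$ are compact.

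Next I would establish continuous dependence of $\Gamma_{\overline{a}}$ on $\{\overline{a}_{ij}\}$; once this is in place, the uniform bound follows at once from the compactness of $\mathcal{E}_{\mu} \times \Sigma$. I would argue by a normal-families method. For $\overline{a}^{(k)} \to \overline{a}^{(\infty)}$ in $\mathcal{E}_{\mu}$ with corresponding operators $\overline{L}^{(k)}$, each $\Gamma_{\overline{a}^{(k)}}$ is $(2-Q)$-homogeneous, smooth on $\mathbb{R}^{n} \setminus \{0\}$, $\overline{L}^{(k)}$-harmonic there, and locally integrable. Testing Theorem \ref{Thm Folland}(b) against a fixed $u \in C_{0}^{\infty}$ with $u \equiv 1$ near $0$ yields a uniform $L^{1}_{\text{loc}}$ bound for the family $\{\Gamma_{\overline{a}^{(k)}}\}$. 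Since each $\overline{L}^{(k)}$ can be rewritten as a sum of squares $\sum_{i}(Y_{i}^{(k)})^{2}$ with $Y_{i}^{(k)} = \sum_{j}((\overline{a}^{(k)})^{1/2})_{ij} X_{j}$ depending continuously on $\overline{a}^{(k)}$, uniform subelliptic a priori estimates upgrade this to uniform $C^{m}_{\text{loc}}$ bounds on compacta of $\mathbb{R}^{n} \setminus \{0\}$. A subsequence then converges in $C^{\infty}_{\text{loc}}(\mathbb{R}^{n} \setminus \{0\})$ to a $(2-Q)$-homogeneous function $U$ with $\overline{L}^{(\infty)} U = 0$ off the origin; passing to the limit in the representation formula of Theorem \ref{Thm Folland}(b) identifies $U$ with $\Gamma_{\overline{a}^{(\infty)}}$ by uniqueness.

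The hard step will be justifying the passage to the limit in the convolution $\int \Gamma_{\overline{a}^{(k)}}(y^{-1} \circ x)\, \overline{L}^{(k)} u(y)\, dy$ near the origin: one needs a $y$-integrability estimate uniform in $k$ that does not circularly presuppose Proposition \ref{bounded not depending on the coeficients} itself. The natural tool is Folland's parametrix construction, in which $\Gamma_{\overline{a}}$ is built as a singular leading term plus a smoothing remainder, both depending continuously on $\overline{a}$ in norms strong enough for dominated convergence; alternatively one can invoke a Harnack inequality on annuli uniform over $\mathcal{E}_{\mu}$. Once this continuity on $\mathcal{E}_{\mu} \times \Sigma$ is secured, the first two steps finish the proof.
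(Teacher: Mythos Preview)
The paper does not prove this proposition; it is quoted from \cite[Thm.~12]{bb1} without argument. Your overall strategy---reduce by $(2-Q)$-homogeneity to the unit sphere $\Sigma$, then combine compactness of $\mathcal{E}_{\mu}\times\Sigma$ with continuous dependence of $\Gamma_{\overline{a}}$ on $\overline{a}$---is exactly the route taken in that reference, so in spirit your proposal matches the intended proof.

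One point in your outline needs sharpening. The claim that testing the representation formula of Theorem~\ref{Thm Folland}(b) against a cutoff $u$ with $u\equiv 1$ near the origin \emph{by itself} yields a uniform $L^{1}_{\mathrm{loc}}$ bound on $\{\Gamma_{\overline{a}^{(k)}}\}$ is not correct as stated: for such $u$ the function $\overline{L}^{(k)}u$ is supported in an annulus away from $0$, and the identity $u(0)=\int\Gamma_{\overline{a}^{(k)}}(y^{-1})\,\overline{L}^{(k)}u(y)\,dy$ then produces only a uniform \emph{lower} bound on $\int_{\text{annulus}}|\Gamma_{\overline{a}^{(k)}}|$, not the upper bound you need. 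The uniform-in-$\mu$ ingredient you defer to the last paragraph (a Harnack inequality on annuli with constants depending only on $\mu$, or Folland's parametrix with continuous dependence on $\overline{a}$) is in fact required already at this earlier stage to produce the first quantitative upper bound on $|\Gamma_{\overline{a}^{(k)}}|$ away from the pole. Once that is in place, the subelliptic estimates of Theorem~\ref{subelliptic estimate} (whose constants the paper notes depend on $\overline{a}$ only through $\mu$) do upgrade it to uniform $C^{m}_{\mathrm{loc}}$ bounds, and your normal-families/compactness argument then goes through and agrees with \cite{bb1}.
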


Another key tool that we need from the general theory of H\"{o}rmander's
operators is represented by the so-called subelliptic estimates. To formulate
these, we need to recall the standard definition of (Euclidean, isotropic)
fractional Sobolev spaces: for any $s\in\mathbb{R}$ the space $H^{s}$ is
defined as the set of functions (or tempered distributions) such that%
\[
\left\Vert u\right\Vert _{H^{s}}^{2}=\int_{\mathbb{R}^{n}}\left(  1+\left\vert
\xi\right\vert ^{2}\right)  ^{s}\left\vert \widehat{u}\left(  \xi\right)
\right\vert ^{2}d\xi
\]
is finite, where $\widehat{u}\left(  \xi\right)  $ denotes the Fourier
transform of $u$. Then:

\begin{theorem}
[Subelliptic estimates, see \cite{Ko}]\label{subelliptic estimate}There exists
$\varepsilon>0$, depending on the $X_{i}$ and, for every $\eta,\eta_{1}\in
C_{0}^{\infty}\left(  \mathbb{R}^{n}\right)  $ with $\eta_{1}=1$ on
$\operatorname*{sprt}\eta$ and any $\sigma,\tau>0$, there exists a constant
$c$ depending on $\sigma,\tau,\eta,\eta_{1},X_{i}$ such that%
\[
\left\Vert \eta u\right\Vert _{H^{\sigma+\varepsilon}}\leq c\left(  \left\Vert
\eta_{1}\overline{L}u\right\Vert _{H^{\sigma}}+\left\Vert \eta_{1}u\right\Vert
_{H^{-\tau}}\right)
\]
where $\overline{L}$ is like in (\ref{constant coefficient operator}).
Moreover, the constant $c$ depends on the coefficients $\overline{a}_{ij}$
only through the number $\mu$.
\end{theorem}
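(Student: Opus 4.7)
The plan is to derive this statement from the standard subelliptic estimate of Kohn for sum-of-squares H\"{o}rmander operators, and to keep track of how the constants depend on the coefficient matrix $\{\overline{a}_{ij}\}$.

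First, I would put $\overline{L}$ in sum-of-squares form. Since $\{\overline{a}_{ij}\}$ is symmetric and satisfies \eqref{ellipticity}, there is an invertible matrix $B=(b_{ij})$ with $B^T B=\{\overline{a}_{ij}\}$, and both $\|B\|$ and $\|B^{-1}\|$ are bounded in terms of $\mu$ alone. Setting $Y_i=\sum_j b_{ij}X_j$, each $Y_i$ is left invariant and $1$-homogeneous, and $\overline{L}=\sum_{i=1}^q Y_i^2$. Since $B$ is invertible, the Lie algebra generated by $Y_1,\dots,Y_q$ coincides with that generated by $X_1,\dots,X_q$, so $Y_i$ still satisfy H\"{o}rmander's condition of step $s$. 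Moreover, the iterated commutators of the $Y_i$ are linear combinations, with coefficients polynomial in the entries of $B$, of the iterated commutators of the $X_i$; in particular bounds on these commutators are controlled through $\mu$ only.

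Second, I would invoke Kohn's subelliptic estimate (in its classical localized form) for the sum-of-squares operator $\overline{L}=\sum Y_i^2$: there exists $\varepsilon>0$ (depending only on the step $s$ and on the $X_i$) such that for every $\sigma\ge 0$ and every pair of cutoffs $\eta,\eta_1$ with $\eta_1=1$ on $\operatorname*{sprt}\eta$,
\[
\left\Vert \eta u\right\Vert _{H^{\sigma+\varepsilon}}\leq c\left(\left\Vert \eta_{1}\overline{L}u\right\Vert _{H^{\sigma}}+\left\Vert \eta_{1}u\right\Vert _{L^{2}}\right).
\]
To control the dependence of $c$ on the matrix, I would retrace the standard commutator-based proof of Kohn's estimate: all the constants appearing in it come from bounds on the $Y_i$ and their iterated commutators and on the symbol class of $\overline{L}$, which by Step~1 are uniform in $\{\overline{a}_{ij}\}$ within the ellipticity class $\mu$.

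Third, I would upgrade the $L^{2}$ term on the right to an $H^{-\tau}$ term for arbitrary $\tau>0$. This is done by a standard iteration: I pick a finite chain of cutoffs $\eta=\phi_{0}\prec\phi_{1}\prec\dots\prec\phi_{N}=\eta_{1}$ (each equal to $1$ on the support of the previous one), and I apply the estimate repeatedly with progressively lower regularity indices. At each step I gain $\varepsilon$ in regularity on the $u$ term; after $N\gtrsim\tau/\varepsilon$ iterations the remainder sits in $H^{-\tau}$, with constants still depending on the $\overline{a}_{ij}$ only through $\mu$.

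The main obstacle is Step~2, namely obtaining the \emph{uniform} dependence on the matrix. Kohn's argument is usually stated with constants that are opaque in their dependence on the operator; one needs to revisit the proof (or use a compactness argument over the compact set of admissible matrices $\{\overline{a}_{ij}\}$ with fixed $\mu$) to extract that the bounds on $Y_i$, $[Y_i,Y_j]$, and their higher commutators, hence all the constants in the estimate, can be taken uniform in the ellipticity class.
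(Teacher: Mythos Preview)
The paper does not actually give a proof of this theorem: it is stated with a citation to Kohn \cite{Ko}, and immediately afterwards the authors only remark that ``the last statement about the dependence of $c$ on the $\overline{a}_{ij}$ can be directly checked following the proof.'' Your proposal is therefore aligned with the paper's treatment and simply makes explicit what the paper leaves implicit: rewrite $\overline{L}$ as $\sum Y_i^2$ (the paper itself notes this is possible, referring to \cite[\S 2.4]{bb1}), invoke Kohn's estimate, and track that all bounds on the $Y_i$ and their commutators depend on $\{\overline{a}_{ij}\}$ only through $\mu$.
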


Classical subelliptic estimates are proved for a \emph{fixed }operator of
H\"{o}rmander's type; however, the last statement about the dependence of $c$
on the $\overline{a}_{ij}$ can be directly checked following the proof.

For the operator $\overline{L}$ we can give a standard definition of weak
solution to a Dirichlet problem:

\begin{definition}
Let $\Omega$ a bounded domain. Given two functions $f\in W_{X}^{1,2}%
(\Omega),g\in L^{2}\left(  \Omega\right)  $, we say that $u\in W_{X}%
^{1,2}(\Omega)$ is a weak solution to the Dirichlet problem%
\begin{equation}
\left\{
\begin{array}
[c]{ll}%
\overline{L}u=g & \text{in }\Omega\\
u=f & \text{on }\partial\Omega
\end{array}
\right.  \label{DP}%
\end{equation}
if $u-f\in W_{X,0}^{1,2}(\Omega)$ and%
\[
-\int_{\Omega}\sum_{i,j=1}^{q}\overline{a}_{ij}X_{j}uX_{i}\varphi=\int
_{\Omega}g\varphi\quad\forall\varphi\in C_{0}^{\infty}(\Omega).
\]

\end{definition}

The validity of Poincar\'{e}'s inequality allows to prove in the standard way,
by Lax-Milgram's Lemma, the \emph{unique solvability of }(\ref{DP}). We stress
the fact that, although the operator $\overline{L}$ is hypoelliptic, so that
any distributional solution to $\overline{L}u=g$ is smooth in any open subset
where $g$ is smooth, the solvability of a Dirichlet problem in classical sense
is not a trivial result for $\overline{L}$, but requires careful assumptions
on the domain. Also, $W_{X}^{2,p}\left(  \Omega\right)  $ estimates up to the
boundary are not known, so far, so that the Dirichlet problem is not even
solvable in the sense of strong solutions. This is a major difference between
the present context and that of elliptic and parabolic equations, in the
application of Krylov' technique.

A maximum principle for weak solutions can be easily proved in the standard
way. This requires some preliminary (standard) definition:

\begin{definition}
For $u\in W_{X}^{1,2}\left(  \Omega\right)  $, we say that%
\[
\overline{L}u\geq0\text{ in }\Omega
\]
in weak sense if%
\[
\int_{\Omega}\sum_{i,j=1}^{q}\overline{a}_{ij}X_{j}uX_{i}\varphi\leq
0\quad\forall\varphi\in C_{0}^{\infty}(\Omega),\varphi\geq0\text{ in }\Omega.
\]
We say that%
\[
u\leq0\text{ on }\partial\Omega
\]
in weak sense if%
\[
\max\left(  u,0\right)  \in W_{X,0}^{1,2}\left(  \Omega\right)  .
\]

\end{definition}

The following can be easily proved exactly like in the elliptic case:

\begin{proposition}
[Maximum Principle]\label{maximum principle}Let $\Omega$ an open set of
$\mathbb{R}^{n}$. For any $u\in W_{X}^{1,2}\left(  \Omega\right)  $, if
$\overline{L}u\geq0$ in $\Omega$ and $u\leq0$ on $\partial\Omega$ (in weak
sense), then $u\leq0$ in $\Omega$ a.e.
\end{proposition}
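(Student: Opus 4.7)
The plan is to adapt the classical weak maximum principle argument for uniformly elliptic operators to the present subelliptic setting. Set $v := \max(u,0)$; by the assumption $u \leq 0$ on $\partial\Omega$ in the weak sense, $v \in W_{X,0}^{1,2}(\Omega)$. The strategy is to use $v$ as a test function in the weak inequality $\overline{L}u \geq 0$ and then exploit ellipticity and Poincaré's inequality to conclude $v = 0$ a.e.

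Since the definition of weak subsolution only permits test functions in $C_{0}^{\infty}(\Omega)$, I would first approximate $v$ in $W_{X}^{1,2}(\Omega)$ by a sequence $\{\varphi_{k}\} \subset C_{0}^{\infty}(\Omega)$, arranging $\varphi_{k} \geq 0$ (e.g., by mollifying the positive parts of any approximating sequence using the left-invariant group convolution, which commutes in a controlled way with the $X_{i}$). Testing the weak inequality against each $\varphi_{k}$ and passing to the limit yields
\[
\int_{\Omega} \sum_{i,j=1}^{q} \overline{a}_{ij}\, X_{j}u\, X_{i}v \leq 0.
\]
A Stampacchia-type chain rule in $W_{X}^{1,2}(\Omega)$—which reduces to the Euclidean one, since each $X_{i}$ is a smooth first-order differential operator with polynomial coefficients—gives $X_{i}v = \chi_{\{u>0\}}\, X_{i}u$ a.e. Substituting and invoking the ellipticity condition (\ref{ellipticity}) leads to
\[
\mu \int_{\Omega} |Dv|^{2} \leq \int_{\Omega} \sum_{i,j=1}^{q} \overline{a}_{ij}\, X_{j}v\, X_{i}v \leq 0,
\]
so $X_{i}v = 0$ a.e.\ in $\Omega$ for every $i$.

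To finish, I would extend $v$ by zero to a large ball $B \supset \Omega$, obtaining a function in $W_{X}^{1,2}(B)$ with vanishing horizontal gradient. Poincaré's inequality (Theorem \ref{poincare}), applied on $B$ (after a routine density argument to handle that $v$ is not $C^{1}$) together with the fact that $v$ vanishes on the set $B \setminus \overline{\Omega}$ of positive measure, forces $v$ to equal a constant a.e.\ on $B$, and the constant must be $0$. Hence $u \leq 0$ a.e.\ in $\Omega$. The main technical obstacle is the combined density-and-chain-rule step: the justification that $u^{+}$ is an admissible test function and that $X_{i}u^{+} = \chi_{\{u>0\}}\, X_{i}u$ a.e.\ in the intrinsic Sobolev space $W_{X}^{1,2}$. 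Both facts are routine in the Euclidean case and transfer to Carnot groups because the group convolution provides a translation-invariant mollification that interacts cleanly with the left-invariant vector fields $X_{i}$.
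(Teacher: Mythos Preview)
Your proposal is correct and is precisely the classical elliptic argument that the paper has in mind: the paper does not actually write out a proof of this proposition, stating only that it ``can be easily proved exactly like in the elliptic case,'' and your test-function-$u^{+}$ argument via ellipticity and Poincar\'e is the standard elliptic proof transplanted to the Carnot group setting. The technical points you flag (density of nonnegative $C_{0}^{\infty}$ functions, the Stampacchia chain rule for $X_{i}u^{+}$, and extending Poincar\'e beyond $C^{1}$) are indeed routine here because the $X_{i}$ are smooth first-order operators and left-invariant mollification is available.
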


\subsection{Main result}

Let us now consider an operator%
\[
Lu\equiv\sum_{i,j=1}^{q}a_{ij}(x)X_{i}X_{j}u
\]
where $X_{1},...,X_{q}$, as above, are a system of left invariant and
$1$-homogeneous H\"{o}rmander's vector fields on a Carnot group in
$\mathbb{R}^{n}$, the matrix $\left\{  a_{ij}\right\}  $ is symmetric, the
coefficients satisfy
\begin{equation}
a_{ij}\in VMO_{loc}\left(  \Omega\right)  \cap L^{\infty}\left(
\Omega\right)  \label{H1}%
\end{equation}
on a bounded domain $\Omega\subset\mathbb{R}^{n}$, and the uniform positivity
condition holds: there exists $\mu>0$ such that
\begin{equation}
\mu|\xi|^{2}\leq{a}_{ij}(x)\xi_{i}\xi_{j}\leq\frac{1}{\mu}|\xi|^{2}
\label{uniform ellipticity}%
\end{equation}
for every $\xi\in\mathbb{R}^{q}$ and a.e. $x\in\Omega$.

By the assumption (\ref{H1}) and the inclusion (\ref{VMO inclusion}), if we
set%
\begin{equation}
a_{m,r}^{\sharp}=\sum_{i,j=1}^{q}\eta_{m,a_{ij}}\left(  r\right)  ,
\label{a sharp}%
\end{equation}
we have%
\[
\sup_{r\leq\varepsilon_{m}}a_{m,r}^{\sharp}<\infty\text{ and }\lim
_{r\rightarrow0^{+}}a_{m,r}^{\sharp}=0.
\]

The main result that can be proved is then the following:

\begin{theorem}
\label{Final result}Under the previous assumptions, for any $\Omega_{m}%
\Subset\Omega$ and $p\in\left(  1,\infty\right)  $ there exists a constant $c$
depending on $\Omega,\Omega_{m},p,\mu,\mathbb{G}$ and the function
$a_{m,r}^{\sharp}\ $such that%
\[
\left\Vert X_{i}X_{j}u\right\Vert _{L^{p}\left(  \Omega_{m}\right)
}+\left\Vert X_{i}u\right\Vert _{L^{p}\left(  \Omega_{m}\right)  }\leq
c\left\{  \left\Vert Lu\right\Vert _{L^{p}\left(  \Omega\right)  }+\left\Vert
u\right\Vert _{L^{p}\left(  \Omega\right)  }\right\}
\]
for $i,j=1,2,...,q$ and any $u\in W_{X}^{2,p}\left(  \Omega\right)  $.
\end{theorem}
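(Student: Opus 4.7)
The plan is to derive a pointwise bound on the local sharp maximal function of $X_iX_j u$ of the form
\[
(X_iX_ju)^{\#}_{\Omega_{m+2},\Omega_{m+3}}(x) \le C \Bigl\{ M(Lu)(x) + a^{\sharp}_{m+2,r_0}\sum_{k,l} M(X_kX_l u)(x) + \text{lower order terms}\Bigr\}
\]
valid for every $x\in\Omega_{m+1}$ once $r_0$ is smaller than some fixed scale, and then conclude by combining it with the local Fefferman--Stein inequality (Theorem~\ref{Thm Fefferman Stein}) and the $L^p$-boundedness of $M$. The VMO hypothesis makes $a^{\sharp}_{m+2,r_0}$ arbitrarily small for $r_0$ small, so the top-order term can be absorbed on the left. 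A covering of $\Omega_m$ by finitely many Fefferman--Stein balls, together with the interpolation inequality of Proposition~\ref{interpolation inequality} applied after a cutoff to recover $\|X_i u\|_{L^p}$, then yields the statement.

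To prove the pointwise sharp maximal estimate, I freeze the coefficients. Fix a ball $B=B(\bar x,r)$ with $\bar x\in\Omega_{m+2}$ and $r\le\varepsilon_{m+2}$, set $\bar a_{ij}=(a_{ij})_B$ and $\bar L=\sum \bar a_{ij}X_iX_j$; the matrix $\{\bar a_{ij}\}$ satisfies~\eqref{ellipticity} with the same $\mu$. On a suitably enlarged ball, decompose $u=v+w$, where $v$ is the weak $W^{1,2}_X$-solution of $\bar L v=0$ with trace $u$ on the boundary (well-posed by Lax--Milgram using Poincaré's inequality, Theorem~\ref{poincare}), and $w=u-v\in W^{1,2}_{X,0}$ satisfies, in the weak sense,
\[
\bar L w \;=\; \bar L u \;=\; Lu + \sum_{i,j=1}^{q} (\bar a_{ij}-a_{ij})\, X_iX_j u .
\]

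For the regular part $v$, hypoellipticity of $\bar L$ (Theorem~\ref{Thm Folland}) and the uniform subelliptic estimates of Theorem~\ref{subelliptic estimate}, transferred via the dilations $D(\lambda)$, give a Campanato-type interior bound
\[
\frac{1}{|B|}\int_B \bigl|X_iX_j v - (X_iX_j v)_B\bigr|\,dy \;\le\; C \Bigl(\frac{r}{R}\Bigr)^{\!\alpha} \sum_{k,l}\bigl\|X_kX_l v\bigr\|_{L^{\infty}(B(\bar x,R/2))}
\]
for $r\le R/2$ and some $\alpha>0$, with $C$ depending on $\{\bar a_{ij}\}$ only through $\mu$; this is the Krylov-style oscillation estimate on the model operator. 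For the singular part $w$, the representation formula of Theorem~\ref{Thm Folland}(b)--(c) applied to a density approximation of $w$, combined with standard Carnot-group Calderón--Zygmund bounds for the kernel $X_iX_j\Gamma_{\bar a}$ (homogeneous of degree $-Q$, uniformly controlled in $\{\bar a_{ij}\}$ by Proposition~\ref{bounded not depending on the coeficients}), bounds $|B|^{-1}\int_B|X_iX_j w|$ by $M(\bar L w)(\bar x)$, hence by $M(Lu)(\bar x) + a^{\sharp}_{m+2,r}\sum_{k,l} M(X_kX_l u)(\bar x)$ after averaging the coefficient-oscillation factor.

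The main obstacle is precisely the point stressed just before Theorem~\ref{Final result}: up-to-the-boundary $W^{2,p}_X$ estimates for $\bar L$ are not available, and the Dirichlet problem for $\bar L$ is not known to be strongly solvable, so the splitting $u=v+w$ lives only at the $W^{1,2}_X$ level. The representation of $X_iX_j w$ through $\Gamma_{\bar a}$ must therefore be justified indirectly, either by regularizing $u$ in $W^{2,p}_X$ and passing to the limit in the pointwise bound, or by applying the interior subelliptic estimates of Theorem~\ref{subelliptic estimate} directly to $w$; keeping every constant uniform in the frozen matrix inside the fixed ellipticity class (which is what Proposition~\ref{bounded not depending on the coeficients} and the remark following Theorem~\ref{subelliptic estimate} deliver) is essential throughout. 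Once the pointwise sharp maximal bound is established, the chain Fefferman--Stein $\to$ $L^p$-boundedness of $M$ $\to$ absorption of the top-order term (choosing $r_0$ so small that $Ca^{\sharp}_{m+2,r_0}$ lies below the reciprocal of the Fefferman--Stein constant) $\to$ covering of $\Omega_m$ $\to$ interpolation for $\|X_i u\|_{L^p}$ produces the desired estimate.
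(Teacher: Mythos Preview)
Your outline has the right skeleton---freeze coefficients, split $u$ into a $\bar L$-harmonic part plus a remainder, bound the sharp maximal function, invoke Fefferman--Stein---but the pointwise estimate you write down is not the one that actually holds, and the discrepancy is precisely the mechanism that makes Krylov's argument close. The oscillation of the $\bar L$-harmonic piece $v$ has \emph{nothing to do with the VMO modulus}: it is small only because, for each ball $B_r$, the splitting is performed on a ball $B_{kr}$ with $k\ge4\Lambda^{3}$ a free parameter, so the Campanato gain is $c/k$. The correct sharp maximal bound (Theorem~\ref{Theorem 3.6}) therefore carries an additional term $\frac{c}{k}\sum_{i,j} M(X_iX_ju)$, and the absorption is two-stage: first fix $k$ large so that $c/k<\tfrac12$, \emph{then} choose $R$ small so that $(a^{\sharp}_{m+2,\gamma R})^{1/\beta p_1}$ handles the coefficient-oscillation term. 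Relying solely on the smallness of $a^{\sharp}$, as your proposal does, leaves the harmonic contribution uncontrolled at scales $r$ comparable to the enlarged radius.

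Two further steps in your sketch do not go through as stated. Your Campanato inequality leaves $\|X_kX_lv\|_{L^{\infty}(B_{R/2})}$ on the right, but $v$ is only linked to $u$ through its boundary trace; converting this into $\frac{1}{|B_{kr}|}\int_{B_{kr}}|X_iX_ju|$ is exactly the content of Lemmas~\ref{Lemma 1}--\ref{Lemma2} and requires a maximum-principle/fundamental-solution comparison together with a Poincar\'e subtraction of an affine function to eliminate the $\|u\|_{L^1}$ and $\|X_ju\|_{L^1}$ terms. And the claimed bound $|B|^{-1}\int_B|X_iX_jw|\le M(\bar Lw)(\bar x)$ is false for a singular-integral operator: what one actually has (Lemma~\ref{Lemma ex BB1}, itself nontrivial in the absence of boundary $W^{2,p}$ theory) is $\|D^2w\|_{L^{p_1}(B_r)}\le ck^{2}\|\bar Lw\|_{L^{p_1}(B_{kr})}$ for $p_1>1$, so H\"older yields $ck^{2+Q/p_1}\bigl(M(|\bar Lu|^{p_1})\bigr)^{1/p_1}$ and forces the VMO factor to appear with exponent $(a^{\sharp})^{1/\beta p_1}$ against $\bigl(M(|X_iX_ju|^{p_1\alpha})\bigr)^{1/\alpha p_1}$, with $p_1\alpha<p$ so that the $L^p$ maximal inequality still applies.
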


What we will actually prove here is the basic step towards the above theorem, namely:

\begin{theorem}
\label{main result proved}Under the previous assumptions, for any $\Omega
_{m}\Subset\Omega$ and $p\in\left(  1,\infty\right)  $ the set $\Omega_{m}$
can be covered with a finite number of balls $B_{R}\left(  x_{i}\right)  $
such that for every $u\in C_{0}^{\infty}\left(  B_{R}\right)  $
\[
\sum_{i,j=1}^{q}\left\Vert X_{i}X_{j}u\right\Vert _{L^{p}\left(  B_{R}\right)
}\leq c\left\Vert Lu\right\Vert _{L^{p}\left(  B_{R}\right)  }%
\]
where the constant $c$ depends on $\Omega,\Omega_{m},p,\mu,\mathbb{G}$ and the
function $a_{m,r}^{\sharp}.$
\end{theorem}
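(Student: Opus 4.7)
The plan is to adapt Krylov's sharp-maximal-function approach to the present Carnot-group setting. The two ingredients are (i) a pointwise bound on the local sharp maximal function $(X_{i}X_{j}u)^{\#}_{\Omega_{m+2},\Omega_{m+3}}(x)$ in terms of maximal functions of $Lu$ and $X_{i}X_{j}u$ with a small prefactor, and (ii) the local Fefferman--Stein inequality of Theorem \ref{Thm Fefferman Stein}; a standard absorption argument then closes the estimate. The covering of $\Omega_{m}$ is chosen so that the radius $R$ of each ball $B_{R}(x_{k})$ is small enough that Theorem \ref{Thm Fefferman Stein} applies, $B_{\gamma R}\Subset\Omega_{m+2}$, and the VMO modulus $a^{\sharp}_{m+2,R}$ is as small as needed in the absorption step, which is possible by (\ref{a sharp}).

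To prove the pointwise bound, fix $u\in C_{0}^{\infty}(B_{R})$, $x\in\Omega_{m+2}$ and an admissible ball $B=B(\bar{x},r)\ni x$. Let $B^{*}$ be a fixed dilate of $B$, set $\bar{a}_{ij}=(a_{ij})_{B^{*}}$, and let $\bar{L}$ be the associated constant-coefficient operator in (\ref{constant coefficient operator}). Split $u=v+w$ on $B^{*}$, with $v\in W_{X}^{1,2}(B^{*})$ solving $\bar{L}v=0$, $v=u$ on $\partial B^{*}$ (solvable by Lax--Milgram thanks to Theorem \ref{poincare}, and unique by Proposition \ref{maximum principle}), and $w=u-v\in W_{X,0}^{1,2}(B^{*})$ satisfying
\[
\bar{L}w=\bar{L}u=Lu+\sum_{i,j}(\bar{a}_{ij}-a_{ij})X_{i}X_{j}u.
\]
Writing $X_{i}X_{j}u-(X_{i}X_{j}u)_{B}=[X_{i}X_{j}v-(X_{i}X_{j}v)_{B}]+[X_{i}X_{j}w-(X_{i}X_{j}w)_{B}]$ reduces matters to two separate estimates.

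For the $v$-piece, since $\bar{L}$ and the $X_{k}$ are left-invariant, each $X_{i}X_{j}v$ is itself $\bar{L}$-harmonic in $B^{*}$; a scaling argument based on the subelliptic estimates of Theorem \ref{subelliptic estimate} followed by Sobolev embedding yields a pointwise gradient bound of the form $\sup_{B}|D(X_{i}X_{j}v)|\leq Cr^{-1}\bigl(|B^{*}|^{-1}\int_{B^{*}}|X_{i}X_{j}v|^{q}\bigr)^{1/q}$ for a suitable $q<p$, with $C$ depending on $\bar{a}_{ij}$ only through $\mu$. Combined with Poincaré's inequality (Theorem \ref{poincare}) this controls the oscillation of $X_{i}X_{j}v$ on $B$ by $CM(|X^{2}u|^{q})^{1/q}(x)+CM(|X^{2}w|^{q})^{1/q}(x)$. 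For the $w$-piece I invoke the constant-coefficient $L^{q}$ estimate $\|X_{i}X_{j}w\|_{L^{q}(B^{*})}\leq C\|\bar{L}w\|_{L^{q}(B^{*})}$, which follows by Calderón--Zygmund theory from the homogeneous kernels $X_{i}X_{j}\Gamma_{\bar{a}}$ of Theorem \ref{Thm Folland}, with constant depending on $\bar{a}$ only through $\mu$ thanks to Proposition \ref{bounded not depending on the coeficients}. Expanding $\bar{L}w$, using Hölder's inequality with conjugate exponents $s,s'$ chosen so that $qs<p$, and applying John--Nirenberg together with $a_{ij}\in L^{\infty}$ to obtain
\[
\Bigl(\tfrac{1}{|B^{*}|}\int_{B^{*}}|\bar{a}_{ij}-a_{ij}|^{qs'}\Bigr)^{1/(qs')}\leq C\,\bigl(a^{\sharp}_{m+2,r}\bigr)^{\beta}
\]
for some $\beta>0$, yields the pointwise bound
\[
(X_{i}X_{j}u)^{\#}_{\Omega_{m+2},\Omega_{m+3}}(x)\leq C\,M(|Lu|^{q})^{1/q}(x)+C\,\bigl(a^{\sharp}_{m+2,R}\bigr)^{\beta}M(|X^{2}u|^{qs})^{1/(qs)}(x).
\]

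Since $X_{i}X_{j}u$ is supported in $B_{R}$ with zero integral (because $X_{k}^{*}=-X_{k}$), Theorem \ref{Thm Fefferman Stein} gives $\|X_{i}X_{j}u\|_{L^{p}(B_{R})}\leq C\|(X_{i}X_{j}u)^{\#}\|_{L^{p}(B_{\gamma R})}$; combining with the pointwise bound above and (\ref{boundedness of the maximal}) applied in $L^{p/q}$ and $L^{p/(qs)}$, one obtains
\[
\sum_{i,j}\|X_{i}X_{j}u\|_{L^{p}(B_{R})}\leq C\|Lu\|_{L^{p}(B_{R})}+C\,\bigl(a^{\sharp}_{m+2,R}\bigr)^{\beta}\sum_{i,j}\|X_{i}X_{j}u\|_{L^{p}(B_{R})},
\]
and absorbing the last term requires only that $R$ be chosen so small that $C(a^{\sharp}_{m+2,R})^{\beta}\leq 1/2$. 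The most delicate technical point I foresee is the bookkeeping of the range of radii in the local sharp maximal: the supremum in (\ref{sharp maximal}) runs over all $r\leq\varepsilon_{m+2}$, whereas VMO smallness holds only on small scales. I plan to split the sup at a threshold $r_{0}$, handling $r_{0}<r\leq\varepsilon_{m+2}$ by using the compact support of $X_{i}X_{j}u$ in $B_{R}$ with $R\ll r_{0}$ to make this piece trivially small, so that the effective scale felt by the VMO modulus is $r_{0}$. A secondary difficulty is the uniformity, in both the ellipticity class $\mu$ and the scale $r$, of the pointwise regularity for $\bar{L}$-harmonic functions and of the $L^{q}$ bound for the solution operator of $\bar{L}$; here the homogeneity of $\bar{L}$ together with the uniform kernel bound of Proposition \ref{bounded not depending on the coeficients} are the essential ingredients.
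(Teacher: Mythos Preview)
Your decomposition $u=v+w$ is exactly the one the paper uses, but the argument as written has a real gap: the pointwise bound you state at the end is missing a term. From your own computation, Poincar\'{e}'s inequality together with the gradient estimate $\sup_{B}|D(X_{i}X_{j}v)|\leq Cr^{-1}\bigl(\lvert B^{*}\rvert^{-1}\int_{B^{*}}|X_{i}X_{j}v|^{q}\bigr)^{1/q}$ only gives
\[
\operatorname{osc}_{B}(X_{i}X_{j}v)\leq C\Bigl(\tfrac{1}{|B^{*}|}\int_{B^{*}}|X_{i}X_{j}v|^{q}\Bigr)^{1/q}\leq C\,M(|X^{2}u|^{q})^{1/q}(x)+C\,M(|X^{2}w|^{q})^{1/q}(x),
\]
with \emph{no} small prefactor in front of the first summand when $B^{*}$ is a ``fixed dilate'' of $B$. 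That term survives to the end, and after Fefferman--Stein and the maximal inequality it produces $C\sum\lVert X_{i}X_{j}u\rVert_{L^{p}}$ on the right-hand side with $C$ not small, so the absorption fails. The remedy the paper uses is to make the dilation a free large parameter: take $B^{*}=B_{kr}$; then the scale-correct gradient bound is $\sup_{B}|D(X_{i}X_{j}v)|\leq C(kr)^{-1}(\ldots)$, and Poincar\'{e} on $B_{r}$ yields a factor $C/k$ in the harmonic-piece oscillation (this is Lemma~\ref{Lemma2}). One then chooses $k$ large first to kill this term and only afterwards chooses $R$ small to exploit the VMO modulus; the paper's pointwise bound (Theorem~\ref{Theorem 3.6}) accordingly has \emph{three} terms, not two.

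There is a second, related difficulty. You claim $\lVert X_{i}X_{j}w\rVert_{L^{q}(B^{*})}\leq C\lVert\bar{L}w\rVert_{L^{q}(B^{*})}$ for $w\in W_{X,0}^{1,2}(B^{*})$ ``by Calder\'{o}n--Zygmund theory from the homogeneous kernels $X_{i}X_{j}\Gamma_{\bar{a}}$''. But $w$ is the solution of a Dirichlet problem on $B^{*}$, not a convolution with $\Gamma_{\bar{a}}$; the extension of $w$ by zero is only $W_{X}^{1,2}(\mathbb{R}^{n})$, and $W_{X}^{2,q}$ estimates up to the boundary are \emph{not known} in this subelliptic setting (the paper stresses this explicitly). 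The paper obtains instead the interior bound $\lVert D^{2}w\rVert_{L^{q}(B_{r})}\leq ck^{2}\lVert\bar{L}w\rVert_{L^{q}(B_{kr})}$ (Lemma~\ref{Lemma ex BB1}), proved by combining Folland's local estimate with a maximum-principle comparison against $-\Gamma_{\bar{a}}*|\bar{L}w|$ to remove the $L^{q}$ norm of $w$; the price is the factor $k^{2}$, which is harmless once $k$ is fixed. Your proposal needs both of these ingredients---the free parameter $k$ and a substitute for the missing boundary estimate---before the absorption argument can close.
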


The proof of Theorem \ref{main result proved} is where the different real
analysis approach of this paper with respect to \cite{bb1} plays its role.
Proving Theorem \ref{Final result} starting with Theorem
\ref{main result proved} is mainly a matter of cutoff functions and
interpolation inequalities for Sobolev norms, which can be performed exactly
like in \cite{bb1} and therefore will not be repeated here.

\section{Local estimates for the model operator}

We start with several a priori estimates for the operator $\overline{L}$,
defined as in (\ref{constant coefficient operator}) with \emph{constant}
$\left\{  \overline{a}_{ij}\right\}  $. The constants in our estimates will
depend on this matrix only through the number $\mu$. Recall that the operator
$\overline{L}$, which in our context is the analog of the constant coefficient
operator in the elliptic case, is hypoelliptic, $2$-homogeneous and
translation invariant on $\mathbb{G}$.

\begin{lemma}
\label{Lemma 1}For any $u\in C^{\infty}(\mathbb{R}^{n})$ and $R>0$, let $h\in
W_{X}^{1,2}(B_{R})$ be the weak solution to
\begin{equation}
\left\{
\begin{array}
[c]{ll}%
\overline{L}h=0 & \text{in }B_{R}\\
h=u & \text{on }\partial B_{R}.
\end{array}
\right.  \label{DP1}%
\end{equation}
(Here $B_{R}$ stands for $B_{R}\left(  0\right)  $). Then $h\in C^{\infty
}(B_{R})$ and if $R\geq4\Lambda^{2}$, where from now on $\Lambda$ is the
constant appearing in Poincar\'{e}'s inequality (Thm. \ref{poincare}), the
following holds:
\begin{equation}
\sup_{B_{1}}\left\vert X_{i}X_{j}X_{k}h\right\vert \leq c\sum_{i,j=1}^{q}\Vert
X_{i}X_{j}u\Vert_{L^{1}(B_{R})} \label{third derivatives}%
\end{equation}
for all $i,j,k=1,...,q$. The constant $c$ only depends on $\mathbb{G},\mu$, in
particular it is independent of $u$.
\end{lemma}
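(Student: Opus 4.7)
The plan is to combine interior regularity of $\overline{L}$-harmonic functions with a polynomial-subtraction reduction and a maximum-principle comparison against the fundamental solution $\Gamma_{\overline{a}}$. The smoothness $h\in C^{\infty}(B_{R})$ is immediate from the hypoellipticity of $\overline{L}$ (Theorem \ref{Thm Folland}).

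The first main ingredient is the interior estimate
\[
\sup_{B_{1}}|X_{I}h|\leq c\|h\|_{L^{1}(B_{2})},\qquad |I|\leq 3,
\]
with $c=c(\mathbb{G},\mu)$, obtained by iterating Theorem \ref{subelliptic estimate} on $\overline{L}h=0$ with a nested sequence of cutoffs supported in balls shrinking from $B_{2}$ to $B_{1}$, followed by Sobolev embedding, to get first $\sup_{B_{1}}|X_{I}h|\leq c\|h\|_{L^{2}(B_{2})}$. The final upgrade from $L^{2}$ to $L^{1}$ can be obtained by a self-improvement argument standard for hypoelliptic equations, or directly via the representation $\eta h=\Gamma_{\overline{a}}*\overline{L}(\eta h)$ for a cutoff $\eta$ equal to $1$ on $B_{3/2}$ and supported in $B_{2}$: the function $\overline{L}(\eta h)$ is supported where $\nabla\eta\neq 0$, which is away from $B_{1}$, so $X_{I}^{x}\Gamma_{\overline{a}}(y^{-1}\circ x)$ is smooth and bounded for $x\in B_{1}$ and $y\in\operatorname{supp}\nabla\eta$.

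To control $\|h\|_{L^{1}(B_{2})}$ I would write $h=u-w$ with $w:=u-h\in W_{X,0}^{1,2}(B_{R})$ solving $\overline{L}w=\overline{L}u$, and bound the two pieces separately. For $u$, the space $\mathcal{P}:=\operatorname{span}\{1,x_{1},\ldots,x_{q}\}$ (first-layer coordinates) satisfies $X_{i}X_{j}P=0$ for every $P\in\mathcal{P}$ since $X_{i}x_{k}=\delta_{ik}$. Thus $\overline{L}P=0$ and $X_{I}P=0$ for $|I|\geq 2$, and uniqueness in (\ref{DP1}) gives that the substitution $u\mapsto u-P$ induces $h\mapsto h-P$, leaving both sides of (\ref{third derivatives}) unchanged. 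A double application of Theorem \ref{poincare}---first to $X_{i}u$ at scale $\Lambda B_{2}$, then to $u-\sum_{i}(X_{i}u)_{\Lambda B_{2}}x_{i}$ at scale $B_{2}$---produces $P\in\mathcal{P}$ with
\[
\|u-P\|_{L^{1}(B_{2})}\leq c\sum_{i,j=1}^{q}\|X_{i}X_{j}u\|_{L^{1}(B_{2\Lambda^{2}})}\leq c\sum_{i,j=1}^{q}\|X_{i}X_{j}u\|_{L^{1}(B_{R})},
\]
the last inclusion using $R\geq 4\Lambda^{2}$. After subtracting this $P$ I may assume $\|u\|_{L^{1}(B_{2})}\leq c\sum\|X_{i}X_{j}u\|_{L^{1}(B_{R})}$.

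The decisive step is the bound on $w$. Set
\[
\psi(x):=-\int_{B_{R}}\Gamma_{\overline{a}}(y^{-1}\circ x)|\overline{L}u(y)|dy\geq 0,
\]
nonnegative because $\Gamma_{\overline{a}}\leq 0$. Since $\overline{L}\Gamma_{\overline{a}}=\delta_{0}$, one gets $\overline{L}\psi=-|\overline{L}u|$ in $B_{R}$, hence $\overline{L}(\psi\pm w)\leq 0$ in $B_{R}$ while $\psi\pm w\geq 0$ on $\partial B_{R}$ (as $w$ vanishes there and $\psi\geq 0$). Proposition \ref{maximum principle}, applied to $-(\psi\pm w)$, then yields $|w|\leq\psi$ in $B_{R}$, and by Proposition \ref{bounded not depending on the coeficients} together with the integrability of $\|z\|^{2-Q}$ near the origin and its decay at infinity, the bracketed supremum in
\[
\|w\|_{L^{1}(B_{2})}\leq\Bigl(\sup_{y\in B_{R}}\int_{B_{2}}|\Gamma_{\overline{a}}(y^{-1}\circ x)|dx\Bigr)\|\overline{L}u\|_{L^{1}(B_{R})}
\]
is a constant depending only on $\mathbb{G},\mu$, so $\|w\|_{L^{1}(B_{2})}\leq c\sum\|X_{i}X_{j}u\|_{L^{1}(B_{R})}$. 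Assembling everything,
\[
\sup_{B_{1}}|X_{i}X_{j}X_{k}h|\leq c\|h\|_{L^{1}(B_{2})}\leq c\bigl(\|u\|_{L^{1}(B_{2})}+\|w\|_{L^{1}(B_{2})}\bigr)\leq c\sum_{i,j=1}^{q}\|X_{i}X_{j}u\|_{L^{1}(B_{R})}.
\]
I expect the hardest part to be the $\Gamma_{\overline{a}}$-comparison: rigorously justifying the weak formulation of $\overline{L}\psi=-|\overline{L}u|$ and the admissibility of $\psi\pm w$ in Proposition \ref{maximum principle}, and verifying that the kernel bound is independent of $R$; the $L^{2}\to L^{1}$ bootstrap in the interior estimate also requires some care.
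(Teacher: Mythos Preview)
Your proposal is correct and follows the same overall architecture as the paper: interior estimate via Theorem~\ref{subelliptic estimate}, a maximum-principle comparison with a potential built from $\Gamma_{\overline{a}}$, and a first-layer polynomial subtraction using Poincar\'e twice. The difference lies in the decomposition used to pass from $h$ to $\overline{L}u$.

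The paper introduces a cutoff $\varphi$ equal to $1$ near $\partial B_{R}$ and $0$ on $B_{3}$, sets $v=h-\varphi u$, and observes that $v=h$ on $B_{2}$ while $\overline{L}v$ is supported in the annulus $B_{R}\setminus B_{3}$. The comparison $|v|\leq w$ (with $w$ the potential of $|\overline{L}v|$) then yields a \emph{pointwise} bound on $h$ in $B_{2}$, because the kernel $\Gamma_{\overline{a}}(y^{-1}\circ x)$ is bounded when $x\in B_{2}$ and $y\notin B_{3}$. This gives $\|h\|_{L^{2}(B_{2})}$ directly, so only the $L^{2}$ form of the interior estimate is needed. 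The lower-order terms $\|u\|_{L^{1}(B_{4})}$, $\|X_{j}u\|_{L^{1}(B_{4})}$ arising from the commutator $\overline{L}(\varphi u)$ are then removed by the polynomial subtraction, exactly as you do.

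Your decomposition $h=u-w$ avoids the cutoff but only produces an $L^{1}$ bound on $h$ in $B_{2}$, via Fubini and the local integrability of $\|\cdot\|^{2-Q}$ (your uniform bound $\sup_{y\in B_{R}}\int_{B_{2}}|\Gamma_{\overline{a}}(y^{-1}\circ x)|\,dx<\infty$ is correct and $R$-independent, by splitting $\|y\|\lessgtr 3$). You therefore need the interior estimate with $\|h\|_{L^{1}(B_{2})}$ on the right. Of the two routes you sketch for this, the representation $\eta h=\Gamma_{\overline{a}}\ast\overline{L}(\eta h)$ is not self-contained as written, since $\overline{L}(\eta h)$ contains first derivatives $X_{j}h$ on the annulus, not just $h$. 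The cleaner justification is the one you list first: Theorem~\ref{subelliptic estimate} with $\overline{L}h=0$ gives $\|\eta h\|_{H^{\sigma+\varepsilon}}\leq c\|\eta_{1}h\|_{H^{-\tau}}$ for any $\sigma,\tau>0$, and taking $\tau>n/2$ together with the embedding $L^{1}\hookrightarrow H^{-\tau}$ yields $\sup_{B_{1}}|X_{I}h|\leq c\|h\|_{L^{1}(B_{2})}$ in one step. With this, your argument goes through; the trade-off is that the paper's cutoff buys a pointwise bound and avoids the $H^{-\tau}$ embedding, while your route avoids constructing $\varphi$ and keeps the comparison function simpler.
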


\begin{proof}
Let $w\in W_{X}^{1,2}(B_{R})$ be the unique weak solution to the Dirichlet
problem%
\begin{equation}
\left\{
\begin{array}
[c]{ll}%
\overline{L}w=-\overline{L}u & \text{in }B_{R}\\
w=0 & \text{on }\partial B_{R}%
\end{array}
\right. \nonumber
\end{equation}
and let $h=u+w.$ Then $h$ solves (\ref{DP1}) and, since $\overline{L}$ is
hypoelliptic in $\mathbb{R}^{n}$ and $-\overline{L}u\in C^{\infty}(B_{R})$,
$h\in C^{\infty}(B_{R})$.

To prove (\ref{third derivatives}), let us now assume $R\geq4\Lambda^{2}$ (in
particular, $R>4)$ and let us apply the subelliptic estimates (Thm.
\ref{subelliptic estimate}) with cutoff functions $\eta,\eta_{1}\in
C_{0}^{\infty}(B_{2})$, $\eta_{1}=1$ in sprt $\eta$:%
\[
\Vert\eta h\Vert_{H^{\sigma+\epsilon}}\leq c\left\{  \Vert\eta_{1}\overline
{L}h\Vert_{H^{\sigma}}+\Vert\eta_{1}h\Vert_{H^{-\tau}}\right\}  .
\]
Then since $\overline{L}h=0$ in $B_{R}$, taking $\tau=0$ and $\sigma$ large
enough we have%
\[
\sup_{B_{1}}\left\vert X_{i}X_{j}X_{k}h\right\vert \leq c\Vert\eta
h\Vert_{H^{\sigma+\epsilon}}\leq c\Vert h\Vert_{L^{2}(B_{2})},
\]
where the first inequality follows by the classical Sobolev embedding theorems.

Then, it is enough to prove that
\begin{equation}
\Vert h\Vert_{L^{2}(B_{2})}\leq c\sum_{i,j=1}^{q}\Vert X_{i}X_{j}u\Vert
_{L^{1}(B_{R})}. \label{Lemma1_local1}%
\end{equation}

Let $\varphi\in C^{\infty}(\mathbb{R}^{n})$ such that $\varphi(x)=1$ if $\Vert
x\Vert\geq3.5$ and $\varphi(x)=0$ if $\Vert x\Vert\leq3$ and define
\[
v=h-\varphi u.
\]
Then $v\in C^{\infty}(B_{R})$ and%
\[
\overline{L}v=\overline{L}(-\varphi u)=-\varphi\overline{L}u-u\overline
{L}\varphi-2\sum_{i,j=1}^{q}\overline{a}_{ij}X_{i}\varphi\,X_{j}u=:-g.
\]
Also, since $h-u\in W_{X,0}^{1,2}(B_{R})$ and $\varphi=1$ near $\partial
B_{R}$, we have $v\in W_{X,0}^{1,2}(B_{R})$.

On the other hand, for
\[
f=\left(  \left\vert \varphi\overline{L}u\right\vert +\left\vert u\overline
{L}\varphi\right\vert +2\left\vert \sum_{i,j=1}^{q}\overline{a}_{ij}%
X_{i}\varphi\,X_{j}u\right\vert \right)  \chi_{B_{R}}%
\]
defined in $\mathbb{R}^{n}$, and $\Gamma_{\overline{a}}$ the global
homogeneous fundamental solution of $\overline{L}$, let
\[
w(x)=-\int_{\mathbb{R}^{n}}\Gamma_{\overline{a}}(y^{-1}\circ x)f(y)dy.
\]
Then $-\overline{L}w=f$ in strong sense (that is, $w\in W_{X}^{2,2}\left(
B_{R}\right)  $ and $-\overline{L}w\left(  x\right)  =f\left(  x\right)  $ for
a.e. $x\in B_{R}$) and then also in the weak sense, and $w\geq0$ in
$\mathbb{R}^{n}$ (since both $-\Gamma_{\overline{a}}$ and $f$ are
nonnegative). Hence the functions $v,w$ satisfy, in weak sense,%
\begin{align*}
&  \left\{
\begin{array}
[c]{ll}%
\overline{L}(v-w)=f-g\geq0 & \text{in }B_{R}\\
v-w\leq0 & \text{on }\partial B_{R}%
\end{array}
\right. \\
&  \left\{
\begin{array}
[c]{ll}%
\overline{L}(-v-w)=g+f\geq0 & \text{in }B_{R}\\
-v-w\leq0 & \text{on }\partial B_{R}%
\end{array}
\right.
\end{align*}
and since $\left\vert g\right\vert \leq f$, by the maximum principle
(Proposition~\ref{maximum principle}) we conclude $\left\vert v\right\vert
\leq w$ in $B_{R}$.

Now for $x\in B_{2}$, since $\varphi(x)=0$ if $\left\Vert x\right\Vert \leq3$
and $f\left(  x\right)  \neq0$ only for $3\leq\left\Vert x\right\Vert \leq
R$,
\begin{align*}
\left\vert h(x)\right\vert  &  =\left\vert \left(  v+\varphi u\right)  \left(
x\right)  \right\vert =\left\vert v(x)\right\vert \leq w(x)=-\int
_{\mathbb{R}^{n}}\Gamma_{\overline{a}}(y^{-1}\circ x)f(y)dy\ \\
&  =-\int_{B_{R}\setminus B_{3}}\Gamma_{\overline{a}}(y^{-1}\circ x)f(y)dy.
\end{align*}
On the other hand, for $x\in B_{2}$ and $y\in B_{R}\setminus B_{3}$ the
function $\Gamma_{\overline{a}}(y^{-1}\circ x)$ is bounded. Actually, by
Proposition \ref{bounded not depending on the coeficients} and
\eqref{inequality of the distance}
\[
0\leq-\Gamma_{\overline{a}}(y^{-1}\circ x)\leq\frac{c}{\left\Vert y^{-1}\circ
x\right\Vert ^{Q-2}}\leq\frac{c}{\left(  \left\Vert y\right\Vert -\left\Vert
x\right\Vert \right)  ^{Q-2}}\leq c.
\]
Hence%
\[
\left\vert h(x)\right\vert \leq c\Vert f\Vert_{1}\ \leq c\left\{
\Vert\overline{L}u\Vert_{L^{1}(B_{R})}+\Vert u\Vert_{L^{1}(B_{3.5})}%
+\sum_{j=1}^{q}\Vert X_{j}u\Vert_{L^{1}(B_{3.5})}\right\}
\]
which in particular gives%
\begin{equation}
\Vert h\Vert_{L^{1}(B_{2})}\leq c\left\{  \sum_{i,j=1}^{q}\Vert X_{i}%
X_{j}u\Vert_{L^{1}(B_{R})}+\Vert u\Vert_{L^{1}(B_{4})}+\sum_{j=1}^{q}\Vert
X_{j}u\Vert_{L^{1}(B_{4})}\right\}  . \label{Lemma1_local2}%
\end{equation}
In order to prove (\ref{Lemma1_local1}) we should remove from the right-hand
side of (\ref{Lemma1_local2}) the terms in $u$ and $X_{j}u$. To this aim, let%
\[
\widetilde{u}\left(  x\right)  =u\left(  x\right)  +c_{0}+\sum_{i=1}^{q}%
c_{i}x_{i}%
\]
for some constants $c_{i}$, $i=0,1,2,...,q$ that we can choose so that%
\begin{align*}
\int_{B_{4}}\widetilde{u}\left(  x\right)  dx  &  =0\\
\int_{B_{4\Lambda}}X_{i}\widetilde{u}\left(  x\right)  dx  &  =0\text{ for
}i=1,2,...,q.
\end{align*}
Namely, since for $i=1,2,...,q$ the vector fields $X_{i}$ have the structure%
\[
X_{i}=\partial_{x_{i}}+\sum_{j=q+1}^{n}b_{ij}\left(  x\right)  \partial
_{x_{j}},
\]
so that $X_{j}\widetilde{u}=X_{j}u+c_{j}$, we can choose%
\begin{align*}
c_{i}  &  =-\frac{1}{\left\vert B_{4\Lambda}\right\vert }\int_{B_{4\Lambda}%
}X_{i}u\left(  x\right)  dx\text{ for }i=1,2,...,q\\
c_{0}  &  =-\frac{1}{\left\vert B_{4}\right\vert }\left(  \int_{B_{4}}u\left(
x\right)  dx+\sum_{i=1}^{q}c_{i}\int_{B_{4}}x_{i}dx\right)  .
\end{align*}
For this choice of $c_{i}$, $i=0,1,2,...,q$ and $\widetilde{u}$, we can now
repeat the above proof defining $\widetilde{h}$ as the solution to
\begin{equation}
\left\{
\begin{array}
[c]{ll}%
\overline{L}\widetilde{h}=0 & \text{in }B_{R}\\
\widetilde{h}=\widetilde{u} & \text{on }\partial B_{R}%
\end{array}
\right. \nonumber
\end{equation}
(with $R\geq4\Lambda^{2}$ as before). Clearly, one simply has%
\[
\widetilde{h}\left(  x\right)  =h\left(  x\right)  +c_{0}+\sum_{i=1}^{q}%
c_{i}x_{i}%
\]
and%
\begin{align*}
\sup_{B_{1}}\left\vert X_{i}X_{j}X_{k}\widetilde{h}\right\vert  &  \leq
c\Vert\widetilde{h}\Vert_{L^{2}(B_{2})}\\
&  \leq c\left\{  \sum_{i,j=1}^{q}\Vert X_{i}X_{j}\widetilde{u}\Vert
_{L^{1}(B_{R})}+\Vert\widetilde{u}\Vert_{L^{1}(B_{4})}+\sum_{j=1}^{q}\Vert
X_{j}\widetilde{u}\Vert_{L^{1}(B_{4})}\right\}  .
\end{align*}
Next, note that $X_{i}X_{j}\widetilde{u}=X_{i}\left(  X_{j}u+c_{j}\right)
=X_{i}X_{j}u$ and by Poincar\'{e}'s inequality (Thm.~\ref{poincare})
\begin{align*}
&  \Vert\widetilde{u}\Vert_{L^{1}(B_{4})}+\sum_{j=1}^{q}\Vert X_{j}%
\widetilde{u}\Vert_{L^{1}(B_{4})}\\
&  =\int_{B_{4}}\left\vert \widetilde{u}\left(  x\right)  -\widetilde
{u}_{B_{4}}\right\vert dx+\sum_{j=1}^{q}\Vert X_{j}\widetilde{u}\Vert
_{L^{1}(B_{4})}\\
&  \leq c\sum_{i=1}^{q}\int_{B_{4\Lambda}}\left\vert X_{i}\widetilde{u}\left(
x\right)  \right\vert dx+\sum_{j=1}^{q}\Vert X_{j}\widetilde{u}\Vert
_{L^{1}(B_{4\Lambda})}%
\end{align*}%
\begin{align*}
&  =c\sum_{i=1}^{q}\int_{B_{4\Lambda}}\left\vert X_{i}\widetilde{u}\left(
x\right)  -X_{i}\widetilde{u}_{B_{4\Lambda}}\right\vert dx\\
&  \leq c\sum_{i,j=1}^{q}\int_{B_{4\Lambda^{2}}}\left\vert X_{j}%
X_{i}\widetilde{u}\left(  x\right)  \right\vert dx\\
&  =c\sum_{i,j=1}^{q}\int_{B_{4\Lambda^{2}}}\left\vert X_{j}X_{i}u\left(
x\right)  \right\vert dx.
\end{align*}
Also, $X_{i}X_{j}X_{k}\widetilde{h}=X_{i}X_{j}X_{k}h$ hence%
\[
\sup_{B_{1}}\left\vert X_{i}X_{j}X_{k}h\right\vert \leq c\sum_{i,j=1}^{q}\Vert
X_{i}X_{j}u\Vert_{L^{1}\left(  B_{4\Lambda^{2}}\right)  }\leq c\sum
_{i,j=1}^{q}\Vert X_{i}X_{j}u\Vert_{L^{1}\left(  B_{R}\right)  }%
\]
and we are done.
\end{proof}

\begin{lemma}
\label{Lemma2}For any $k\geq4\Lambda^{3}$, $r>0$ , $u\in C^{\infty}%
(\mathbb{R}^{n})$ and $h$ the weak solution to%
\begin{equation}
\left\{
\begin{array}
[c]{ll}%
\overline{L}h=0 & \text{ in }B_{kr}\\
h=u & \text{on }\partial B_{kr}%
\end{array}
\right. \nonumber
\end{equation}
we have that for $i,j=1,2,...,q$%
\begin{equation}
\frac{1}{|B_{r}|}\int_{B_{r}}|X_{i}X_{j}h\left(  x\right)  -(X_{i}%
X_{j}h)_{B_{r}}|dx\leq\frac{c}{k}\sum_{i,j=1}^{q}\frac{1}{|B_{kr}|}%
\int_{B_{kr}}|X_{i}X_{j}u\left(  x\right)  |dx, \label{Lemma2_ineq}%
\end{equation}
where the constant $c$ depends on $\mathbb{G}$ and $\mu$, but is independent
from $k$ and $r$.
\end{lemma}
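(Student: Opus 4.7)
The plan is to combine Poincar\'e's inequality (Theorem~\ref{poincare}) with a suitably rescaled application of Lemma~\ref{Lemma 1}. Since $\overline{L}h = 0$ in $B_{kr}$ and $\overline{L}$ is hypoelliptic, $h \in C^{\infty}(B_{kr})$, and because $k \geq 4\Lambda^3 > \Lambda$ one has $\overline{B_{\Lambda r}} \subset B_{kr}$; thus Poincar\'e's inequality with $p = 1$, applied to $X_iX_jh$ on the ball $B_r$, yields
\[
\frac{1}{|B_r|}\int_{B_r} |X_iX_jh - (X_iX_jh)_{B_r}|\,dx \;\leq\; c\,r\,\sup_{B_{\Lambda r}} \sum_{\ell=1}^{q} |X_\ell X_iX_j h|.
\]
It therefore suffices to bound $\sup_{B_{\Lambda r}}|X_iX_jX_\ell h|$ by $c(kr)^{-(Q+1)} \sum \|X_iX_j u\|_{L^1(B_{kr})}$; after multiplication by the Poincar\'e factor $r$ and the identity $|B_{kr}| = c(kr)^Q$, this will yield exactly the asserted $c/k$ factor.

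The decisive step is to invoke Lemma~\ref{Lemma 1} at the \emph{largest} scale compatible with its hypothesis. Set $\lambda = kr/(4\Lambda^2)$ and define $\tilde h(x) = h(D(\lambda)x)$ and $\tilde u(x) = u(D(\lambda)x)$. By left invariance and $2$-homogeneity of $\overline{L}$, the function $\tilde h$ solves $\overline{L}\tilde h = 0$ in $B_{4\Lambda^2}$ with $\tilde h = \tilde u$ on $\partial B_{4\Lambda^2}$, so Lemma~\ref{Lemma 1} applies at its minimal admissible outer radius $R = 4\Lambda^2$ and gives
\[
\sup_{B_1}|X_iX_jX_\ell \tilde h| \;\leq\; c \sum_{i,j=1}^q \|X_iX_j \tilde u\|_{L^1(B_{4\Lambda^2})}.
\]
Unfolding the rescaling (each $X_i$ producing a factor $\lambda$ by $1$-homogeneity, the Jacobian contributing $\lambda^{-Q}$) converts this into
\[
\sup_{B_\lambda}|X_iX_jX_\ell h| \;\leq\; \frac{c}{\lambda^{Q+1}} \sum_{i,j=1}^q \|X_iX_j u\|_{L^1(B_{kr})} \;=\; \frac{c\,(4\Lambda^2)^{Q+1}}{(kr)^{Q+1}}\sum_{i,j=1}^q \|X_iX_j u\|_{L^1(B_{kr})}.
\]
The hypothesis $k \geq 4\Lambda^3$ is precisely the inequality $\Lambda r \leq \lambda$, so $B_{\Lambda r} \subset B_\lambda$ and the required bound on $\sup_{B_{\Lambda r}}$ follows. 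Substituting back into the Poincar\'e estimate and converting $L^1$ integrals into averages via $|B_{kr}| = c(kr)^Q$ yields exactly \eqref{Lemma2_ineq}.

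The main obstacle is precisely the choice of the scaling parameter $\lambda$. The naive choice $\lambda = \Lambda r$ (the smallest one consistent with Poincar\'e) yields a bound on $\sup_{B_{\Lambda r}}|D^3 h|$ whose constant is independent of $k$ and is therefore useless for Lemma~\ref{Lemma2}. The gain of a factor $1/k$ arises only by using the fact that $h$ is harmonic on a \emph{large} ball $B_{kr}$ to invoke Lemma~\ref{Lemma 1} at the maximal admissible inner scale $\lambda = kr/(4\Lambda^2)$, which replaces $\Lambda r$ by $kr$ in the denominator. The hypothesis $k \geq 4\Lambda^3$ (strictly stronger than the $k \geq 4\Lambda^2$ needed by Lemma~\ref{Lemma 1} itself) is used sharply here, to ensure that Poincar\'e's enlargement $B_{\Lambda r}$ still fits inside the ball $B_\lambda$ on which the improved sup estimate is valid.
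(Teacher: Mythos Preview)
Your argument is correct and follows the same strategy as the paper's proof: apply Poincar\'e's inequality to reduce the mean oscillation of $X_iX_jh$ on $B_r$ to a sup bound on third derivatives over $B_{\Lambda r}$, then rescale by $\lambda = kr/(4\Lambda^2)$ to invoke Lemma~\ref{Lemma 1} at radius $R=4\Lambda^2$, with the hypothesis $k\geq 4\Lambda^3$ ensuring $B_{\Lambda r}\subset B_\lambda$. The only cosmetic difference is that the paper first normalizes to $r=1$ by a preliminary dilation and then rescales by $k/(4\Lambda^2)$, whereas you carry the factor $r$ through the computation directly; the homogeneity bookkeeping is identical.
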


\begin{proof}
It is enough to prove the result for $r=1$. Namely, if we define
$\widetilde{h}\left(  x\right)  =h(D(r)(x))$ and $\widetilde{u}\left(
x\right)  =u(D(r)(x))$, using the 1-homogenety of $X_{i}$, by dilations we
have
\begin{align*}
\frac{1}{|B_{r}|}\int_{B_{r}}|X_{i}X_{j}h(x)|dx  &  =\frac{1}{r^{Q}|B_{1}%
|}\int_{B_{1}}|\left(  X_{i}X_{j}h\right)  (D_{r}(y))|r^{Q}dy\ \\
&  =\frac{1}{|B_{1}|}r^{-2}\int_{B_{1}}|X_{i}X_{j}\widetilde{h}(y)|dy
\end{align*}
Analogously, we obtain
\[
\frac{1}{|B_{r}|}\int_{B_{r}}|X_{i}X_{j}h(x)-(X_{i}X_{j}h)_{B_{r}}|dx=\frac
{1}{|B_{1}|}r^{-2}\int_{B_{1}}|X_{i}X_{j}\widetilde{h}(y)-(X_{i}%
X_{j}\widetilde{h})_{B_{1}}|dy
\]
and%

\[
\frac{1}{|B_{kr}|}\int_{B_{kr}}|X_{i}X_{j}u(x)|dx=\frac{1}{|B_{k}|}r^{-2}%
\int_{B_{k}}|X_{i}X_{j}\widetilde{u}(y)|dy
\]
hence if the result holds for $r=1$ it holds for every $r>0$.

Now, for $k\geq4\Lambda^{3}$, let $h\in W_{X}^{1,2}(B_{k})$ satisfy
\begin{equation}
\left\{
\begin{array}
[c]{ll}%
\overline{L}h=0 & \text{in }B_{k}\\
h=u & \text{on }\partial B_{k}.
\end{array}
\right.  \label{Lemma1_local4}%
\end{equation}
Let us assume that for every $s,i,j=1,...,q$ and $x\in B_{\Lambda}$ ,
\begin{equation}
\left\vert X_{s}X_{i}X_{j}h\left(  x\right)  \right\vert \leq\frac{c}{k}%
\sum_{i,j}^{q}\frac{1}{|B_{k}|}\int_{B_{k}}|X_{i}X_{j}u(x)|dx
\label{Lemma1_local3}%
\end{equation}
(with $c$ independent of $k$) and let us prove (\ref{Lemma2_ineq}) for $r=1$.

By Theorem \ref{poincare},%
\begin{align*}
\frac{1}{|B_{1}|}\int_{B_{1}}|X_{i}X_{j}h(x)-(X_{i}X_{j}h)_{B_{1}}|dx  &
\leq\frac{c}{|B_{\Lambda}|}\sum_{s=1}^{q}\int_{B_{\Lambda}}|X_{s}X_{i}%
X_{j}h(x)|dx\ \\
&  \leq c\sum_{s=1}^{q}\sup_{B_{\Lambda}}\left\vert X_{s}X_{i}X_{j}%
h\right\vert \ \\
&  \leq\frac{c}{k}\sum_{i,j}^{q}\frac{1}{|B_{k}|}\int_{B_{k}}|X_{i}%
X_{j}u(x)|dx,
\end{align*}
which is the assertion for $r=1$.

It remains to prove (\ref{Lemma1_local3}). To do that, for $x\in
B_{4\Lambda^{2}}$ we define $\widetilde{h}(x)=h(D(k/4\Lambda^{2})(x))$ and
$\widetilde{u}(x)=u(D(k/4\Lambda^{2})(x))$. Then $\overline{L}\widetilde{h}=0$
in $B_{4\Lambda^{2}}$ with boundary condition $\widetilde{u}$ and we can apply
Lemma \ref{Lemma 1}, which jointly with dilations and homogenety gives for
$x\in B_{1}$%
\begin{align*}
&  \left(  \frac{k}{4\Lambda^{2}}\right)  ^{3}\left\vert \left(  X_{s}%
X_{i}X_{j}h\right)  (D(k/4\Lambda^{2})(x))\right\vert =\left\vert X_{s}%
X_{i}X_{j}\widetilde{h}(x)\right\vert \ \ \\
&  \leq c\sum_{i,j=1}^{q}\int_{B_{4\Lambda^{2}}}\left\vert X_{i}%
X_{j}\widetilde{u}(x)\right\vert dx\
\end{align*}%
\begin{align*}
&  =c\left\vert B_{4\Lambda^{2}}\right\vert \sum_{i,j=1}^{q}\frac
{1}{\left\vert B_{4\Lambda^{2}}\right\vert }\int_{B_{4\Lambda^{2}}}|X_{i}%
X_{j}(u(D(k/4\Lambda^{2})(x))|dx\ \\
&  =c\left(  \frac{k}{4\Lambda^{2}}\right)  ^{2}\sum_{i,j=1}^{q}\frac
{1}{\left\vert B_{4\Lambda^{2}}\right\vert }\int_{B_{4\Lambda^{2}}}|\left(
X_{i}X_{j}u\right)  (D(k/4\Lambda^{2})(x))|dx\ \\
&  =c\left(  \frac{k}{4\Lambda^{2}}\right)  ^{2}\sum_{i,j=1}^{q}\frac
{1}{|B_{k}|}\int_{B_{k}}|X_{i}X_{j}u(y)|dy.
\end{align*}
Hence, for $x\in B_{1},$
\[
\left\vert \left(  X_{s}X_{i}X_{j}h\right)  (D(k/4\Lambda^{2})(x))\right\vert
\leq\frac{c}{k}\sum_{i,j=1}^{q}\frac{1}{|B_{k}|}\int_{B_{k}}|X_{i}%
X_{j}u(x)|dx.
\]
But, since $x$ ranges in $B_{1}$, the point $y=D(k/4\Lambda^{2})(x)$ ranges in
$B_{k/4\Lambda^{2}}\supset B_{\Lambda}$ (because $k/4\Lambda^{2}\geq\Lambda$)
and the Lemma is proved.
\end{proof}

The next Lemma can be of independent interest:

\begin{lemma}
\label{Lemma ex BB1}Let $p\in\left(  1,\infty\right)  $. There exists a
constant $c$ depending on $p,\mathbb{G},\mu$ such that for any $r>0,k\geq
2,v\in W_{X,0}^{1,2}\left(  B_{kr}\right)  $ the following holds:%
\[
\left\Vert D^{2}v\right\Vert _{L^{p}\left(  B_{r}\right)  }\leq ck^{2}%
\left\Vert \overline{L}v\right\Vert _{L^{p}\left(  B_{kr}\right)  }.
\]

\end{lemma}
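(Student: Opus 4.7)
The natural strategy is to derive this local bound from the global $L^{p}$ Calder\'{o}n-Zygmund estimate for $\overline{L}$ on all of $\mathbb{G}$, via a cutoff argument whose error terms are absorbed using Poincar\'{e}-Friedrichs (exploiting the vanishing boundary condition built into $W_{X,0}^{1,2}$). The factor $k^{2}$ should emerge as a product of one power of $k$ from the cutoff and one power of $k$ from Poincar\'{e}.

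First I would reduce to $r=1$: under the change of variables $\widetilde{v}(x)=v(D(r)x)$, the $1$-homogeneity of the $X_{i}$, the $2$-homogeneity of $\overline{L}$, and the relation $|B_{\rho}|=\rho^{Q}|B_{1}|$ make both sides of the inequality rescale by a common factor, so only $r=1$ needs to be treated. Next I would establish the global bound
\[
\|X_{i}X_{j}w\|_{L^{p}(\mathbb{R}^{n})}\le c\,\|\overline{L}w\|_{L^{p}(\mathbb{R}^{n})},\qquad w\in C_{0}^{\infty}(\mathbb{R}^{n}),
\]
by differentiating the representation formula of Theorem \ref{Thm Folland}(b): $X_{i}X_{j}w=K_{ij}\ast\overline{L}w$ with $K_{ij}=X_{i}X_{j}\Gamma_{\overline{a}}$. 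This kernel is homogeneous of degree $-Q$, smooth off the origin, and carries the cancellation inherited from $\overline{L}\Gamma_{\overline{a}}=\delta_{0}$, hence defines a convolution Calder\'{o}n-Zygmund kernel on the homogeneous group $\mathbb{G}$; the Folland-Stein theory of singular integrals on stratified groups then yields $L^{p}$-boundedness, with constant uniform in $\{\overline{a}_{ij}\}$ through $\mu$ by Proposition \ref{bounded not depending on the coeficients} together with the uniformity in the subelliptic estimates of Theorem \ref{subelliptic estimate}.

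Then I would pick a cutoff $\varphi\in C_{0}^{\infty}(B_{k/2})$ with $\varphi\equiv 1$ on $B_{1}$ and $|X^{\alpha}\varphi|\le c\,k^{-|\alpha|}$ for $|\alpha|\le 2$ (for instance $\varphi=\chi(\|x\|/k)$ for a fixed scalar cutoff $\chi$) and apply the global estimate to $w=\varphi v$, using the hypoellipticity of $\overline{L}$ together with $\overline{L}v\in L^{p}(B_{k})$ to justify the interior $W_{X}^{2,p}$ regularity needed to form $\overline{L}(\varphi v)$ classically. Expanding
\[
\overline{L}(\varphi v)=\varphi\overline{L}v+v\overline{L}\varphi+2\sum_{i,j=1}^{q}\overline{a}_{ij}X_{i}\varphi\,X_{j}v
\]
one obtains
\[
\|D^{2}v\|_{L^{p}(B_{1})}\le c\|\overline{L}v\|_{L^{p}(B_{k})}+\frac{c}{k}\|Xv\|_{L^{p}(B_{k})}+\frac{c}{k^{2}}\|v\|_{L^{p}(B_{k})}.
\]
Since $v\in W_{X,0}^{1,2}(B_{k})$, Poincar\'{e}-Friedrichs gives $\|v\|_{L^{p}(B_{k})}\le ck\|Xv\|_{L^{p}(B_{k})}$, collapsing the two error terms into $(c/k)\|Xv\|_{L^{p}(B_{k})}$. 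An auxiliary first-order estimate $\|Xv\|_{L^{p}(B_{k})}\le ck\|\overline{L}v\|_{L^{p}(B_{k})}$, proved by the parallel convolution-plus-cutoff argument applied to $Xv=(X\Gamma_{\overline{a}})\ast\overline{L}v$ (the kernel $X\Gamma_{\overline{a}}$ being homogeneous of degree $1-Q$ and locally integrable) combined with the interpolation Proposition \ref{interpolation inequality}, then closes the estimate and produces precisely the announced $ck^{2}$.

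The main obstacle I anticipate is the clean derivation of this auxiliary first-order bound for general $p\neq 2$: for $p=2$ it is the immediate energy identity, but for other $p$ one must either carefully localize the convolution $Xv=(X\Gamma_{\overline{a}})\ast\overline{L}v$ while tracking the $k$-dependence through cutoff and Poincar\'{e}, or argue by duality against $L^{p'}$ test functions. Keeping the final exponent at $k^{2}$ (rather than a larger power) forces one to apply the cutoff-and-absorb step in a \emph{single} shot, without iterating across nested balls, where each iteration would pay an extra factor of $k$.
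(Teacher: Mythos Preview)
Your plan has a genuine gap at the step where you pass from the cutoff estimate to the final bound. After expanding $\overline{L}(\varphi v)$ you need to control $\|v\|_{L^{p}(B_{k})}$ and $\|Xv\|_{L^{p}(B_{k})}$, and you propose to do this via Poincar\'{e}--Friedrichs together with the representation $Xv=(X\Gamma_{\overline{a}})\ast\overline{L}v$. Both steps fail under the sole hypothesis $v\in W_{X,0}^{1,2}(B_{k})$. First, Poincar\'{e}--Friedrichs in $L^{p}$ requires $v\in W_{X,0}^{1,p}(B_{k})$; for $p>2$ this is not given, and it cannot be recovered from interior hypoelliptic regularity because $W_{X}^{2,p}$ estimates \emph{up to the boundary} are unavailable in this setting (the paper stresses this explicitly before the maximum principle). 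Second, the convolution identity $Xv=(X\Gamma_{\overline{a}})\ast\overline{L}v$ is valid for $v\in C_{0}^{\infty}$ but not for $v\in W_{X,0}^{1,2}(B_{k})$: if you extend $v$ by zero, the distribution $\overline{L}v$ on $\mathbb{R}^{n}$ carries a singular layer on $\partial B_{k}$ (only $v$, not $Xv$, vanishes there), so it is not $(\overline{L}v)\chi_{B_{k}}$. A density argument does not rescue this, since $v_{n}\to v$ in $W_{X}^{1,2}$ gives no control of $\overline{L}v_{n}$ in $L^{p}$. In short, your scheme reduces the problem to a first-order analogue of the very estimate you are trying to prove, with no independent tool to close it.

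The paper's proof avoids both obstacles by going in the opposite direction. It first uses the interpolation inequality (Proposition~\ref{interpolation inequality}) together with a family of cutoffs and a supremum-over-radii trick to absorb the \emph{first}-order term into the second-order one, arriving at
\[
r^{2}\Vert D^{2}v\Vert_{L^{p}(B_{r})}\leq c\,r^{2}\Vert\overline{L}v\Vert_{L^{p}(B_{kr})}+c\,\Vert v\Vert_{L^{p}(B_{kr})}.
\]
Then, to bound $\Vert v\Vert_{L^{p}(B_{kr})}$ without boundary regularity, it exploits the one tool that does respect the weak boundary condition $v\in W_{X,0}^{1,2}$: the maximum principle. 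Setting $w=-\Gamma_{\overline{a}}\ast\bigl(|\overline{L}v|\chi_{B_{kr}}\bigr)$, one has $\overline{L}(\pm v-w)\geq 0$ in $B_{kr}$ and $\pm v-w\leq 0$ on $\partial B_{kr}$, hence $|v|\leq w$ \emph{pointwise}. A dyadic decomposition of the kernel $\Vert\cdot\Vert^{2-Q}$ then gives $w(x)\leq c(kr)^{2}M(\overline{L}v\,\chi_{B_{kr}})(x)$, and the maximal theorem yields $\Vert v\Vert_{L^{p}(B_{kr})}\leq c(kr)^{2}\Vert\overline{L}v\Vert_{L^{p}(B_{kr})}$, producing exactly the factor $k^{2}$. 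The crucial point is that this comparison with a potential requires no representation formula for $v$ itself and no $L^{p}$ boundary regularity---only the sign information encoded in $W_{X,0}^{1,2}$.
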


Before proving this result, let us explain why it is not trivial. From the
local estimates proved by Folland \cite{fo} it is known that for any $v\in
W_{X}^{1,2}\left(  B_{kr}\right)  $%
\[
\left\Vert D^{2}v\right\Vert _{L^{p}\left(  B_{r}\right)  }\leq c\left(
\left\Vert \overline{L}v\right\Vert _{L^{p}\left(  B_{kr}\right)  }+\left\Vert
Dv\right\Vert _{L^{p}\left(  B_{kr}\right)  }+\left\Vert v\right\Vert
_{L^{p}\left(  B_{kr}\right)  }\right)  .
\]
Also, for $v\in C_{0}^{\infty}\left(  B_{r}\right)  $ one can prove%
\[
\left\Vert D^{2}v\right\Vert _{L^{p}\left(  B_{r}\right)  }\leq c\left\Vert
\overline{L}v\right\Vert _{L^{p}\left(  B_{r}\right)  }.
\]
The nontrivial fact, in the subelliptic context (where $L^{p}$ estimates up to
the boundary are unknown), is removing the $L^{p}$ norm of $v$ from the right
hand side under the weak vanishing condition $v\in W_{0}^{1,2}\left(
B_{kr}\right)  .$

\bigskip

\begin{proof}
For any $\sigma\in(\frac{1}{2},1)$, we can construct (see \cite{bb1} for
details) a cutoff function $\varphi_{\sigma}\in C_{0}^{\infty}(\mathbb{R}%
^{n})$ satisfying: $\varphi_{\sigma}=1$ on $B_{\sigma r}$,
$\operatorname{sprt}\varphi_{s}\subset B_{\sigma^{\prime}r}$, where
$\sigma^{\prime}=\frac{(1+\sigma)}{2}$,
\begin{align*}
\left\vert X_{j}\varphi_{\sigma}\right\vert  &  \leq\frac{c}{(1-\sigma)r}\\
\left\vert X_{i}X_{j}\varphi_{\sigma}\right\vert  &  \leq\frac{c}%
{(1-\sigma)^{2}r^{2}}.
\end{align*}
Let us define two cutoff functions $\varphi_{1},\varphi_{2}$ corresponding to
$\sigma_{1}\in(\frac{1}{2},1),\sigma_{2}=\sigma_{1}^{\prime},$ and let
$\sigma_{3}=\sigma_{2}^{\prime}.$ We can apply Folland's local estimates for
the model operator (see \cite[Theorem 4.9]{fo}) to $v\varphi_{1},$ so that%
\[
\Vert X_{i}X_{j}(v\varphi_{1})\Vert_{L^{p}(B_{\sigma_{2}r})}\leq
c\Vert\overline{L}(v\varphi_{1})\Vert_{L^{p}(B_{\sigma_{2}r})}.
\]
Then, expanding the operator $\overline{L}(v\varphi_{1})$, using the estimate
for the derivatives of $\varphi_{1}$ and multiplying by $(1-\sigma_{1}%
)^{2}r^{2}$ in both sides, we have
\begin{align}
(1-\sigma_{1})^{2}r^{2}\Vert X_{i}X_{j}v\Vert_{L^{p}(B_{\sigma_{1}r})}  &
\leq cr^{2}\Vert\overline{L}v\Vert_{L^{p}(B_{r})}\ \nonumber\\
&  +c(1-\sigma_{1})r\Vert X_{i}v\Vert_{L^{p}(B_{\sigma_{2}r})}\ \nonumber\\
&  +c\Vert v\Vert_{L^{p}(B_{r})}. \label{second_derivatives_of_v}%
\end{align}

In order to estimate $(1-\sigma_{1})r\Vert X_{i}v\Vert_{L^{p}(B_{\sigma_{2}%
r})}$, let us apply Proposition~\ref{interpolation inequality} to
$v\varphi_{2}$. We have%
\begin{align*}
\Vert X_{i}v\Vert_{L^{p}(B_{\sigma_{2}r})}  &  \leq\varepsilon\left\{  \Vert
X_{i}^{2}v\Vert_{L^{p}(B_{\sigma_{3}r})}+\frac{1}{(1-\sigma_{2})r}\Vert
X_{i}v\Vert_{L^{p}(B_{\sigma_{3}r})}\right.  \ \\
&  \left.  +\frac{1}{(1-\sigma_{2})^{2}r^{2}}\Vert v\Vert_{L^{p}(B_{\sigma
_{3}r})}\right\}  +\frac{2}{\varepsilon}\Vert v\Vert_{L^{p}(B_{\sigma_{3}r})}.
\end{align*}
Now, taking $\varepsilon=(1-\sigma_{2})r\delta$ for some $\delta$ and using
the fact that $\frac{1-\sigma}{1-\sigma^{\prime}}=\frac{1}{2}$ we obtain
\begin{align*}
(1-\sigma_{2})r\Vert X_{i}v\Vert_{L^{p}(B_{\sigma_{2}r})}  &  \leq
c\delta(1-\sigma_{3})^{2}r^{2}\Vert X_{i}^{2}v\Vert_{L^{p}(B_{\sigma_{3}r}%
)}\ \\
&  +c\delta(1-\sigma_{3})r\Vert X_{i}v\Vert_{L^{p}(B_{\sigma_{3}r})}\ \\
&  +c\delta\Vert v\Vert_{L^{p}(B_{\sigma_{3}r})}+\frac{2}{\delta}\Vert
v\Vert_{L^{p}(B_{\sigma_{3}r})},
\end{align*}
which, letting%
\[
\phi_{k}=\sup_{\sigma\in(\frac{1}{2},1)}(1-\sigma)^{k}r^{k}\Vert D^{k}%
v\Vert_{L^{p}(B_{\sigma r})}%
\]
implies that
\[
\phi_{1}\leq c\delta(\phi_{2}+\phi_{1}+\Vert v\Vert_{L^{p}(B_{r})})+\frac
{c}{\delta}\Vert v\Vert_{L^{p}(B_{r})},
\]
and taking $\delta$ small enough we have
\[
\phi_{1}\leq c\delta\phi_{2}+C\Vert v\Vert_{L^{p}(B_{r})}.
\]
Finally, inserting this in (\ref{second_derivatives_of_v}) and taking the
supremum on $\sigma_{1}$ we have%
\[
\phi_{2}\leq cr^{2}\Vert\overline{L}v\Vert_{L^{p}(B_{r})}+c\Vert v\Vert
_{L^{p}(B_{r})},
\]
which can be read as
\begin{equation}
r^{2}\Vert X_{i}X_{j}v\Vert_{L^{p}(B_{r})}\leq cr^{2}\Vert\overline{L}%
v\Vert_{L^{p}(B_{kr})}+c\Vert v\Vert_{L^{p}(B_{kr})} \label{BB1 local}%
\end{equation}
for $r>0$, $k>2$ and for some $c$ depending on $p,\mathbb{G},\mu$.

On the other hand, the function
\[
w\left(  x\right)  =-\int_{B_{kr}}\Gamma_{\overline{a}}\left(  x^{-1}\circ
y\right)  \left\vert f\left(  y\right)  \right\vert dy
\]
solves%
\[
\left\{
\begin{array}
[c]{l}%
\overline{L}w=-\left\vert f\right\vert \text{ in }B_{kr}\\
w\geq0\text{ on }\partial B_{kr}%
\end{array}
\right.
\]
and taking $f=\overline{L}v\cdot\chi_{B_{kr}}$, by the same reasoning of the
proof of Lemma~\ref{Lemma 1}, the maximum principle implies $\left\vert
v\right\vert \leq w\text{ in }B_{kr}\text{.}$ Then, by
Proposition~\ref{bounded not depending on the coeficients}%
\begin{align*}
\left\vert v(x)\right\vert  &  \leq w\left(  x\right)  \leq c\int_{B_{kr}%
}\frac{1}{\Vert x^{-1}\circ y\Vert^{Q-2}}|f(y)|dy\ \\
&  \leq c\sum_{s=0}^{\infty}\int_{\frac{2kr}{2^{s+1}}\leq\Vert x^{-1}\circ
y\Vert\leq\frac{2kr}{2^{s}}}\frac{1}{\Vert x^{-1}\circ y\Vert^{Q-2}%
}|f(y)|dy\ \\
&  \leq c\sum_{s=0}^{\infty}\left(  \frac{2^{s+1}}{2kr}\right)  ^{Q-2}%
\int_{\Vert x^{-1}\circ y\Vert\leq\frac{2kr}{2^{s}}}|f(y)|dy\ \\
&  \leq c(kr)^{2}\sum_{s=0}^{\infty}\frac{1}{2^{2s}}Mf(x),
\end{align*}
and by (\ref{boundedness of the maximal})
\[
\Vert v\Vert_{L^{p}(B_{kr})}\leq(kr)^{2}\Vert\overline{L}v\Vert_{L^{p}%
(B_{kr})}%
\]
which inserted in (\ref{BB1 local}) gives us the result.
\end{proof}

\begin{lemma}
\label{Lemma3}Let $p\in(1,\infty)$. Then there exists a constant $c$ depending
on $p,\mathbb{G},\mu$ such that for $k\geq4\Lambda^{3}$, $r>0$ and $u\in
C^{\infty}(\mathbb{R}^{n})$
\begin{align*}
&  \frac{1}{|B_{r}|}\int_{B_{r}}|X_{i}X_{j}u\left(  x\right)  -(X_{i}%
X_{j}u)_{B_{r}}|dx\\
&  \leq\frac{c}{k}\sum_{i,j=1}^{q}\frac{1}{|B_{kr}|}\int_{B_{kr}}|X_{i}%
X_{j}u\left(  x\right)  |dx+ck^{2+Q/p}\left(  \frac{1}{|B_{kr}|}\int_{B_{kr}%
}|\overline{L}u\left(  x\right)  |^{p}dx\right)  ^{1/p}.
\end{align*}

\end{lemma}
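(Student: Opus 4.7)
The plan is to split $u$ into a part where $\overline{L}u$ vanishes and a part with vanishing boundary trace, and apply Lemma \ref{Lemma2} to the first and Lemma \ref{Lemma ex BB1} to the second. Concretely, since $k\geq 4\Lambda^{3}$ in particular $kr>0$, I let $h\in W_{X}^{1,2}(B_{kr})$ be the (unique) weak solution to
\[
\overline{L}h=0\ \text{in }B_{kr},\qquad h=u\ \text{on }\partial B_{kr},
\]
so that $w:=u-h\in W_{X,0}^{1,2}(B_{kr})$ and $\overline{L}w=\overline{L}u$ in $B_{kr}$. Writing $X_{i}X_{j}u=X_{i}X_{j}h+X_{i}X_{j}w$ and using the triangle inequality for BMO-type averages gives
\[
\frac{1}{|B_{r}|}\int_{B_{r}}|X_{i}X_{j}u-(X_{i}X_{j}u)_{B_{r}}|\,dx
\leq I_{h}+I_{w},
\]
where $I_{h},I_{w}$ are the corresponding oscillation integrals of $X_{i}X_{j}h$ and $X_{i}X_{j}w$ over $B_{r}$.

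For $I_{h}$, Lemma \ref{Lemma2} (which is exactly designed for this purpose and which applies because $k\geq 4\Lambda^{3}\geq 4\Lambda^{2}$) immediately gives
\[
I_{h}\leq\frac{c}{k}\sum_{i,j=1}^{q}\frac{1}{|B_{kr}|}\int_{B_{kr}}|X_{i}X_{j}u|\,dx,
\]
which is the first term on the right-hand side. For $I_{w}$, I first bound the mean oscillation by twice the $L^{1}$ average, then pass to the $L^{p}$ average by Hölder:
\[
I_{w}\leq\frac{2}{|B_{r}|}\int_{B_{r}}|X_{i}X_{j}w|\,dx
\leq\frac{2}{|B_{r}|^{1/p}}\|X_{i}X_{j}w\|_{L^{p}(B_{r})}.
\]
Since $w\in W_{X,0}^{1,2}(B_{kr})$ and $k\geq 4\Lambda^{3}\geq 2$, Lemma \ref{Lemma ex BB1} applies and yields
\[
\|X_{i}X_{j}w\|_{L^{p}(B_{r})}\leq\|D^{2}w\|_{L^{p}(B_{r})}
\leq ck^{2}\|\overline{L}w\|_{L^{p}(B_{kr})}=ck^{2}\|\overline{L}u\|_{L^{p}(B_{kr})}.
\]

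Putting these together and using $|B_{kr}|/|B_{r}|=k^{Q}$ to rewrite the $L^{p}$ norm on $B_{kr}$ as an average,
\[
I_{w}\leq\frac{ck^{2}}{|B_{r}|^{1/p}}\|\overline{L}u\|_{L^{p}(B_{kr})}
=ck^{2}\left(\frac{|B_{kr}|}{|B_{r}|}\right)^{1/p}\!\left(\frac{1}{|B_{kr}|}\int_{B_{kr}}|\overline{L}u|^{p}\,dx\right)^{1/p}
=ck^{2+Q/p}\!\left(\frac{1}{|B_{kr}|}\int_{B_{kr}}|\overline{L}u|^{p}\,dx\right)^{1/p},
\]
which produces the second term of the claim with the correct exponent $2+Q/p$. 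The main point, and essentially the only non-bookkeeping step, is the availability of Lemma \ref{Lemma ex BB1}: without a genuine $L^{p}$ estimate for $D^{2}w$ in terms of $\overline{L}w$ alone (for $w\in W_{X,0}^{1,2}(B_{kr})$), the $\|v\|_{L^{p}}$ lower order term surviving from Folland's interior estimates would not fit into the right-hand side; everything else in the proof is the splitting and a clean Hölder/volume-scaling computation.
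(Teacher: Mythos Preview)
Your proof is correct and follows essentially the same approach as the paper: both decompose $u=h+w$ with $\overline{L}h=0$ in $B_{kr}$ and $w\in W_{X,0}^{1,2}(B_{kr})$, apply Lemma~\ref{Lemma2} to the harmonic part and Lemma~\ref{Lemma ex BB1} (plus H\"older and the volume relation $|B_{kr}|=k^{Q}|B_{r}|$) to the remainder. The only cosmetic difference is that the paper inserts $X_{i}X_{j}h$ and $(X_{i}X_{j}h)_{B_{r}}$ to obtain an $A+B+C$ splitting, whereas you apply the triangle inequality directly to the oscillation seminorm; the resulting bounds coincide.
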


\begin{proof}
For $u$ and $k$ as in the statement, let $h$ be the solution to
\[
\left\{
\begin{array}
[c]{ll}%
\overline{L}h=0 & \text{in }B_{kr}\\
h=u & \text{on }\partial B_{kr},
\end{array}
\right.  \label{DPh}%
\]
then%
\begin{align*}
\frac{1}{|B_{r}|}\int_{B_{r}}|X_{i}X_{j}u(x)-(X_{i}X_{j}u)_{B_{r}}|dx  &
\leq\frac{1}{|B_{r}|}\int_{B_{r}}|X_{i}X_{j}u(x)-X_{i}X_{j}h(x)|dx\ \\
&  +\frac{1}{|B_{r}|}\int_{B_{r}}|X_{i}X_{j}h(x)-(X_{i}X_{j}h)_{B_{r}}|dx\ \\
&  +\frac{1}{|B_{r}|}\int_{B_{r}}|(X_{i}X_{j}h)_{B_{r}}-(X_{i}X_{j}u)_{B_{r}%
}|dx\ \\
&  \equiv A+B+C.
\end{align*}

By Lemma \ref{Lemma2} we have
\[
B\leq\frac{c}{k}\sum_{i,j=1}^{q}\frac{1}{|B_{kr}|}\int_{B_{kr}}|X_{i}%
X_{j}u\left(  x\right)  |dx.
\]

As to $C$, since $(X_{i}X_{j}h)_{B_{r}}-(X_{i}X_{j}u)_{B_{r}}=(X_{i}%
X_{j}h-X_{i}X_{j}u)_{B_{r}}$ it is enough to estimate the term $A$.

Applying Lemma~\ref{Lemma ex BB1} to the weak solution $v$ of the problem
\[
\left\{
\begin{array}
[c]{ll}%
\overline{L}v=\overline{L}u & \text{in }B_{kr}\\
v=0 & \text{on }\partial B_{kr}%
\end{array}
\right.  \label{DPu-h}%
\]
we have%
\[
\Vert X_{i}X_{j}v\Vert_{L^{p}(B_{r})}\leq ck^{2}\Vert\overline{L}v\Vert
_{L^{p}(B_{kr})}.\label{second derivatives estimates}%
\]
Then, by H\"{o}lder inequality we obtain
\begin{align*}
\frac{1}{|B_{r}|}\int_{B_{r}}|X_{i}X_{j}v(x)|dx  &  \leq\left(  \frac
{1}{|B_{r}|}\int_{B_{r}}|X_{i}X_{j}v(x)|^{p}dx\right)  ^{1/p}\ \\
&  \leq ck^{2}|B_{r}|^{-1/p}\left(  \int_{B_{kr}}|\overline{L}v(x)|^{p}%
dx\right)  ^{1/p}\ \\
&  =ck^{2}|B_{r}|^{-1/p}\left(  \int_{B_{kr}}|\overline{L}u(x)|^{p}dx\right)
^{1/p}\ \\
&  =ck^{2+Q/p}\left(  \frac{1}{|B_{kr}|}\int_{B_{kr}}|\overline{L}u\left(
x\right)  |^{p}dx\right)  ^{1/p}%
\end{align*}
and we are done.
\end{proof}

\section{Local estimates for operators with variable coefficients}

Let us now come to study the operator $L$ with variable $VMO_{loc}\left(
\Omega\right)  $ coefficients. The next theorem contains the key local
estimate involving $L$.

For a fixed domain $\Omega_{m}\Subset\Omega_{m+1}$, let us cover $\Omega_{m}$
with a finite number of balls $B_{R}$ with $R$ small enough ($R$ to be chosen
later). In the following theorem $B_{R}$ is one of these balls. The maximal
operator and the local sharp maximal operator which appear in the statement
are defined in \eqref{maximal} and \eqref{sharp maximal} respectively. The
function $a_{m,R}^{\sharp}$ ($VMO$ modulus of the coefficients $a_{ij}$) is
defined in (\ref{a sharp}).

\begin{theorem}
\label{Theorem 3.6}Let $p,\alpha,\beta\in(1,\infty)$ with $\alpha^{-1}%
+\beta^{-1}=1$ and $R\in(0,\infty)$. Then there exists a constant $c$
depending on $p,\alpha,\mathbb{G},\mu$ such that for any $u\in C_{0}^{\infty
}(B_{R})$ and $k\geq4\Lambda^{3}$ we have
\begin{align*}
(X_{i}X_{j}u)_{\Omega_{m+2},\Omega_{m+3}}^{\#}\left(  x\right)   &  \leq
\frac{c}{k}\sum_{i,j=1}^{q}M(X_{i}X_{j}u)\left(  x\right)  +ck^{2+Q/p}\left(
M(|Lu|^{p})\left(  x\right)  \right)  ^{1/p}\ \\
&  +ck^{2+Q/p}\left(  a_{m+2,R}^{\sharp}\right)  ^{1/\beta p}\left(
M(|X_{i}X_{j}u|^{p\alpha})\left(  x\right)  \right)  ^{1/\alpha p}\
\end{align*}
for every $x\in B_{R}$, $R<\varepsilon_{m+2}.$
\end{theorem}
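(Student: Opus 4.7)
The plan is to reduce to Lemma~\ref{Lemma3} by freezing the coefficients on a slightly larger ball, and then to absorb the resulting error using the $VMO$ hypothesis on the $a_{ij}$. Fix $x\in B_{R}$ and an admissible ball $B:=B(\bar{x},r)$ contributing to the local sharp maximal at $x$, so $\bar{x}\in\Omega_{m+2}$, $r\leq\varepsilon_{m+2}$, and $x\in B$. Let $B^{\ast}:=B(\bar{x},kr)$ and define the averaged coefficients $\overline{a}_{ij}:=\frac{1}{|B^{\ast}|}\int_{B^{\ast}}a_{ij}$. By (\ref{uniform ellipticity}) these constants still satisfy the ellipticity bound with the same $\mu$, and so they generate a constant-coefficient model operator $\overline{L}$ to which Lemma~\ref{Lemma3} applies.

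Since the $X_{i}$ and hence $\overline{L}$ are left invariant on $\mathbb{G}$, applying Lemma~\ref{Lemma3} to the left translate $y\mapsto u(\bar{x}\circ y)$ and changing variables produces the same inequality with balls centered at $\bar{x}$ instead of at the origin:
\[
\frac{1}{|B|}\int_{B}|X_{i}X_{j}u-(X_{i}X_{j}u)_{B}|\,dy\leq\frac{c}{k}\sum_{i,j=1}^{q}\frac{1}{|B^{\ast}|}\int_{B^{\ast}}|X_{i}X_{j}u|\,dy+ck^{2+Q/p}\Bigl(\frac{1}{|B^{\ast}|}\int_{B^{\ast}}|\overline{L}u|^{p}\,dy\Bigr)^{1/p}.
\]
Because $x\in B\subset B^{\ast}$, the first term is pointwise bounded by $(c/k)\sum_{i,j}M(X_{i}X_{j}u)(x)$, matching the first piece of the claim.

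Next I would write $\overline{L}u=Lu+\sum_{i,j}(\overline{a}_{ij}-a_{ij})X_{i}X_{j}u$ and use Minkowski's inequality to split the $|\overline{L}u|^{p}$ average into the $|Lu|^{p}$ average (which yields $ck^{2+Q/p}M(|Lu|^{p})(x)^{1/p}$ since $x\in B^{\ast}$) plus commutator-type summands. On each of the latter I apply H\"older with conjugate exponents $\alpha,\beta$:
\[
\Bigl(\frac{1}{|B^{\ast}|}\int_{B^{\ast}}|\overline{a}_{ij}-a_{ij}|^{p}|X_{i}X_{j}u|^{p}\Bigr)^{1/p}\leq\Bigl(\frac{1}{|B^{\ast}|}\int_{B^{\ast}}|\overline{a}_{ij}-a_{ij}|^{p\beta}\Bigr)^{1/p\beta}\Bigl(\frac{1}{|B^{\ast}|}\int_{B^{\ast}}|X_{i}X_{j}u|^{p\alpha}\Bigr)^{1/p\alpha}.
\]
The second factor is bounded by $M(|X_{i}X_{j}u|^{p\alpha})(x)^{1/p\alpha}$. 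For the first, the $L^{\infty}$ bound on $a_{ij}$ yields $|\overline{a}_{ij}-a_{ij}|^{p\beta-1}\leq C$, which reduces the $(p\beta)$-th power average to a first-power one, itself bounded via (\ref{a sharp}) by $c\,a_{m+2,kr}^{\sharp}$.

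The main technical point is controlling $a_{m+2,kr}^{\sharp}$ by the quantity $a_{m+2,R}^{\sharp}$ appearing in the statement. Since the $VMO$ modulus is monotone in the radius, this is immediate for balls with $kr\leq R$; when instead $kr>R$ (which can occur because $r$ ranges up to $\varepsilon_{m+2}$), the ball $B^{\ast}$ is comparable to or larger than the support $B_{R}$ of $u$, and I would treat those scales separately using the doubling of the measure together with the crude $L^{\infty}$ bound on the coefficients, absorbing the resulting contribution into the third term of the theorem at the cost of an enlarged but still admissible constant. Assembling the three pieces then produces the three-term pointwise bound for every admissible $B$, which upon taking the supremum gives the sharp-maximal estimate.
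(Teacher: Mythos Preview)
Your overall architecture matches the paper exactly: apply Lemma~\ref{Lemma3} with a frozen-coefficient operator, split $\overline{L}u=Lu+(\overline{L}-L)u$, and use H\"older with exponents $\alpha,\beta$ together with the $L^{\infty}$ bound on the coefficients to reduce the oscillation factor to a first-power average. The one genuine gap is the case $kr>R$. Your frozen coefficient $\overline{a}_{ij}=(a_{ij})_{B^{\ast}}$ is an average over $B^{\ast}=B(\bar{x},kr)$, a ball whose radius may exceed $\varepsilon_{m+2}$ and which need not even lie in $\Omega$; so the average may be undefined, and in any case the resulting mean oscillation is $\eta_{m+2,a_{ij}}(kr)$, which is \emph{not} controlled by $a_{m+2,R}^{\sharp}$ (the $VMO$ modulus is increasing in the radius). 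Your proposed fallback---use the crude $L^{\infty}$ bound---only yields a fixed constant in place of $(a_{m+2,R}^{\sharp})^{1/\beta p}$, and that destroys the theorem: the entire point of the factor $(a_{m+2,R}^{\sharp})^{1/\beta p}$ is that it can be made $<1/2$ by choosing $R$ small, which is exactly how Theorem~\ref{main result proved} absorbs the third term. ``Enlarging the constant'' therefore cannot save the argument.

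The paper's remedy is simple but essential: make the choice of frozen matrix depend on the scale. When $kr\geq R$ set $\overline{a}_{ij}=(a_{ij})_{B_{R}}$ instead of $(a_{ij})_{B_{kr}}$. Since $u\in C_{0}^{\infty}(B_{R})$, the integrand $|\overline{a}_{ij}-a_{ij}|^{p}|X_{i}X_{j}u|^{p}$ is supported in $B_{kr}\cap B_{R}\subset B_{R}$; hence after the $L^{\infty}$ reduction the oscillation factor becomes $\int_{B_{R}}|a_{ij}-(a_{ij})_{B_{R}}|\leq c|B_{R}|\,a_{m+2,R}^{\sharp}\leq c(kr)^{Q}a_{m+2,R}^{\sharp}$, and the proof goes through uniformly in $r$. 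With this correction your argument coincides with the paper's.
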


The choice of bounding the local sharp maximal function relative to the
domains $\Omega_{m+2},\Omega_{m+3}$ is just for consistence with Theorem
\ref{Thm Fefferman Stein}. As will be apparent from the proof, we could bound
$(X_{i}X_{j}u)_{\Omega_{k},\Omega_{k+1}}^{\#}$ for any desired value of the
integer $k$.

\bigskip

\begin{proof}
Fix $k\geq4\Lambda^{3}$, $r\in\left(  0,\varepsilon_{m+2}\right)  $ and
$\overline{x}\in B_{R}$. Let $B_{r}$ be a ball containing $\overline{x}$. Let
$\overline{L}$ be a constant coefficients operator corresponding to a constant
matrix $\left\{  \overline{a}_{ij}\right\}  $ which will be chosen later,
depending on the values of $r$ and $k$, in the class of matrices satisfying
(\ref{ellipticity}). By Lemma \ref{Lemma3} we have that
\begin{align}
&  \frac{1}{|B_{r}|}\int_{B_{r}}|X_{i}X_{j}u\left(  x\right)  -(X_{i}%
X_{j}u)_{B_{r}}|dx\nonumber\\
&  \leq\frac{c}{k}\sum_{i,j=1}^{q}\frac{1}{|B_{kr}|}\int_{B_{kr}}|X_{i}%
X_{j}u\left(  x\right)  |dx+ck^{2+Q/p}\left(  \frac{1}{|B_{kr}|}\int_{B_{kr}%
}|\overline{L}u(x)|^{p}dx\right)  ^{1/p}\ \nonumber\\
&  \equiv A+B. \label{Thm3.6_1}%
\end{align}

To handle the term $B$, let us write%
\begin{equation}
\left(  \int_{B_{kr}}|\overline{L}u(x)|^{p}dx\right)  ^{1/p}\leq\left(
\int_{B_{kr}}|\overline{L}u(x)-Lu(x)|^{p}dx\right)  ^{1/p}+\left(
\int_{B_{kr}}|Lu(x)|^{p}dx\right)  ^{1/p} \label{Thm3.6_b}%
\end{equation}
with%
\begin{align}
&  \int_{B_{kr}}|\overline{L}u(x)-Lu(x)|^{p}dx\leq c\sum_{i,j=1}^{q}%
\int_{B_{kr}\cap B_{R}}|\overline{a}_{ij}-a_{ij}(x)|^{p}|X_{i}X_{j}%
u(x)|^{p}dx\ \nonumber\\
&  \leq c\sum_{i,j=1}^{q}\left(  \int_{B_{kr}\cap B_{R}}|\overline{a}%
_{ij}-a_{ij}(x)|^{p\beta}dx\right)  ^{1/\beta}\left(  \int_{B_{kr}\cap B_{R}%
}|X_{i}X_{j}u(x)|^{p\alpha}dx\right)  ^{1/\alpha}\ \nonumber\\
&  \equiv c\sum_{i,j=1}^{q}J_{2}^{1/\beta}\ J_{1}^{1/\alpha}. \label{Thm3.6_3}%
\end{align}

We have
\begin{equation}
J_{1}\leq\int_{B_{kr}}|X_{i}X_{j}u(x)|^{p\alpha}dx=c(kr)^{Q}\frac{1}{|B_{kr}%
|}\int_{B_{kr}}|X_{i}X_{j}u\left(  x\right)  |^{p\alpha}dx \label{Term J_1}%
\end{equation}
and since the coeficients $\overline{a}_{ij},a_{ij}$ are bounded by $1/\mu$ we
also have
\[
J_{2}\leq\mu^{-\beta p+1}\int_{B_{kr}\cap B_{R}}\left\vert a_{ij}\left(
x\right)  -\overline{a}_{ij}\right\vert dx.
\]
We now choose a particular constant matrix $\left\{  \overline{a}%
_{ij}\right\}  $, depending on the values of $r,k$, as follows
\[
\overline{a}_{ij}=\left\{
\begin{array}
[c]{ll}%
(a_{ij})_{B_{R}} & \text{if }kr\geq R\\
(a_{ij})_{B_{kr}} & \text{if }kr\leq R.
\end{array}
\right.
\]
Then, if $kr\geq R$%
\begin{equation}
J_{2}\leq c\int_{B_{R}}|a_{ij}(x)-(a_{ij})_{B_{R}}|dx\leq c|B_{R}%
|a_{R}^{\sharp}\leq cR^{Q}a_{R}^{\sharp}\leq c(kr)^{Q}a_{R}^{\sharp}
\label{kr biger than R}%
\end{equation}
while if $kr\leq R$%
\begin{equation}
J_{2}\leq c\int_{B_{kr}}|a_{ij}(x)-(a_{ij})_{B_{kr}}|dx\leq c|B_{kr}%
|a_{kr}^{\sharp}\leq c(kr)^{Q}a_{R}^{\sharp} \label{kr less than R}%
\end{equation}
where, here and in the rest of the proof, we write $a_{R}^{\sharp}$ for
$a_{m+2,R}^{\sharp}$.

In any case, by (\ref{Thm3.6_3}), (\ref{Term J_1}), (\ref{kr biger than R})
and (\ref{kr less than R}) we obtain
\begin{align*}
\int_{B_{kr}}|\overline{L}u(x)-Lu(x)|^{p}dx  &  \leq c\sum_{i,j=1}^{q}\left(
(kr)^{Q}a_{R}^{\sharp}\right)  ^{1/\beta}\left(  (kr)^{Q}(|X_{i}%
X_{j}u|^{p\alpha})_{B_{kr}}\right)  ^{1/\alpha}\ \\
&  =c(kr)^{Q}(a_{R}^{\sharp})^{1/\beta}\sum_{i,j=1}^{q}\left(  (|X_{i}%
X_{j}u|^{p\alpha})_{B_{kr}}\right)  ^{1/\alpha}%
\end{align*}
which inserted in (\ref{Thm3.6_b}) gives%
\begin{align*}
&  \left(  \frac{1}{|B_{kr}|}\int_{B_{kr}}|\overline{L}u(x)|^{p}dx\right)
^{1/p}\leq\left(  \frac{1}{|B_{kr}|}\int_{B_{kr}}|Lu(x)|^{p}dx\right)
^{1/p}\ \\
&  +c(a_{R}^{\sharp})^{1/\beta p}\sum_{i,j=1}^{q}\left(  \frac{1}{|B_{kr}%
|}\int_{B_{kr}}|X_{i}X_{j}u(x)|^{p\alpha}dx\right)  ^{1/\alpha p}.
\end{align*}
In turn, inserting this estimate in (\ref{Thm3.6_1}) we get
\begin{align*}
&  \frac{1}{|B_{r}|}\int_{B_{r}}|X_{i}X_{j}u\left(  x\right)  -(X_{i}%
X_{j}u)_{B_{r}}|dx\\
&  \leq\frac{c}{k}\sum_{i,j=1}^{q}\frac{1}{|B_{kr}|}\int_{B_{kr}}|X_{i}%
X_{j}u(x)|dx+ck^{2+Q/p}\left(  \frac{1}{|B_{kr}|}\int_{B_{kr}}|Lu(x)|^{p}%
dx\right)  ^{1/p}\ \\
&  +ck^{2+Q/p}\left(  a_{R}^{\sharp}\right)  ^{1/\beta p}\sum_{i,j=1}%
^{q}\left(  \frac{1}{|B_{kr}|}\int_{B_{kr}}|X_{i}X_{j}u(x)|^{p\alpha
}dx\right)  ^{1/\alpha p}\ \\
&  \leq\frac{c}{k}\sum_{i,j=1}^{q}M(X_{i}X_{j}u)\left(  \overline{x}\right)
+ck^{2+Q/p}(M(|Lu|^{p})(\overline{x}))^{1/p}\ \\
&  +ck^{2+Q/p}(a_{R}^{\#})^{1/\beta p}\sum_{i,j=1}^{q}\left(  M(|X_{i}%
X_{j}u|^{p\alpha})(\overline{x})\right)  ^{1/\alpha p}.
\end{align*}

Note that in this estimate the constant matrix does not appear any longer. The
constants $c$ are independent of $k,r$ and the estimate holds for any
$k\geq4\Lambda^{3}$ and $r>0.$ We can then take the supremum with respect to
$r\in\left(  0,\varepsilon_{m+2}\right)  $, getting%
\begin{align*}
&  (X_{i}X_{j}u)_{\Omega_{m+2},\Omega_{m+3}}^{\#}\left(  \overline{x}\right)
\leq\frac{N}{k}\sum_{i,j=1}^{q}M(X_{i}X_{j}u)\left(  \overline{x}\right)  \ \\
&  +Nk^{2+Q/p}\left\{  \left(  M(|Lu|^{p})\left(  \overline{x}\right)
\right)  ^{1/p}+\left(  a_{m+2,R}^{\sharp}\right)  ^{1/\beta p}\left(
M(|X_{i}X_{j}u|^{p\alpha})\left(  \overline{x}\right)  \right)  ^{1/\alpha
p}\right\}  \ .
\end{align*}

\end{proof}

We are now in position to give the:

\bigskip

\begin{proof}
[Proof of Theorem \ref{main result proved}]Assume that $B_{R}$ and $B_{\gamma
R}$ are as in the statement of Theorem \ref{Thm Fefferman Stein}. Fix
$p\in(1,\infty)$ and choose $\alpha,\beta,p_{1}\in(1,\infty)$ such that
$\alpha p_{1}<p$ and $\alpha^{-1}+\beta^{-1}=1.$ Apply Theorem
\ref{Theorem 3.6} to these $\alpha,\beta,p_{1}$ and the ball $B_{\gamma R}$
(but with $u\in C_{0}^{\infty}\left(  B_{R}\right)  $) writing, for $x\in
B_{\gamma R}$: \
\begin{align*}
(X_{i}X_{j}u)_{\Omega_{m+2},\Omega_{m+3}}^{\#}\left(  x\right)   &  \leq
\frac{c}{k}\sum_{i,j=1}^{q}M(X_{i}X_{j}u)\left(  x\right)  +ck^{2+Q/p_{1}%
}\left(  M(|Lu|^{p_{1}})\left(  x\right)  \right)  ^{1/p_{1}}\ \\
&  +ck^{2+Q/p_{1}}\left(  a_{m+2,\gamma R}^{\sharp}\right)  ^{1/\beta p_{1}%
}\sum_{i,j=1}^{q}\left(  M(|X_{i}X_{j}u|^{p_{1}\alpha})\left(  x\right)
\right)  ^{1/\alpha p_{1}}.
\end{align*}
Then, taking $L^{p}\left(  B_{\gamma R}\right)  $ norms of both sides we get%
\begin{align}
&  \left\Vert (X_{i}X_{j}u)_{\Omega_{m+2},\Omega_{m+3}}^{\#}\right\Vert
_{L^{p}\left(  B_{\gamma R}\right)  }\leq\frac{c}{k}\sum_{i,j=1}^{q}\left\Vert
M(X_{i}X_{j}u)\right\Vert _{L^{p}\left(  B_{\gamma R}\right)  }\nonumber\\
&  +ck^{2+Q/p_{1}}\left(  \int_{B_{\gamma R}}\left(  M(|Lu|^{p_{1}})\left(
x\right)  \right)  ^{p/p_{1}}dx\right)  ^{1/p}\nonumber\\
&  +ck^{2+Q/p_{1}}\left(  a_{m+2,\gamma R}^{\sharp}\right)  ^{1/\beta p_{1}%
}\sum_{i,j=1}^{q}\left(  \int_{B_{\gamma R}}\left(  M(|X_{i}X_{j}%
u|^{p_{1}\alpha})\left(  x\right)  \right)  ^{p/\alpha p_{1}}dx\right)
^{1/p}. \label{coroll estim 1}%
\end{align}
Note that, since $u\in C_{0}^{\infty}(B_{R}),$%
\[
\int_{B_{R}}X_{i}X_{j}u\left(  x\right)  dx=0.
\]
This follows from the structure of the vector fields $X_{i}$ in Carnot groups,
since
\[
X_{i}f=\sum_{j=1}^{n}b_{ij}\left(  x\right)  \partial_{x_{j}}f=\sum_{j=1}%
^{n}\partial_{x_{j}}\left(  b_{ij}\left(  x\right)  f\right)  .
\]
Hence we can apply Theorem \ref{Thm Fefferman Stein} writing%
\[
\sum_{i,j=1}^{q}\left\Vert X_{i}X_{j}u\right\Vert _{L^{p}\left(  B_{R}\right)
}\leq c\sum_{i,j=1}^{q}\left\Vert (X_{i}X_{j}u)_{\Omega_{m+1},\Omega_{m+2}%
}^{\#}\right\Vert _{L^{p}\left(  B_{\gamma R}\right)  }%
\]
applying the $p$, $p/p_{1}$ and $p/\alpha p_{1}$-maximal inequality
(\ref{boundedness of the maximal}) on the right hand side of
(\ref{coroll estim 1}) (recall that $u$ is compactly supported in $B_{R}$):%
\begin{align*}
&  \leq\frac{c}{k}\sum_{i,j=1}^{q}\left\Vert X_{i}X_{j}u\right\Vert
_{L^{p}\left(  B_{R}\right)  }\\
&  +ck^{2+Q/p_{1}}\left\{  \left\Vert Lu\right\Vert _{L^{p}\left(
B_{R}\right)  }+\left(  a_{m+2,\gamma R}^{\sharp}\right)  ^{1/\beta p_{1}}%
\sum_{i,j=1}^{q}\left\Vert X_{i}X_{j}u\right\Vert _{L^{p}\left(  B_{R}\right)
}\right\}  .
\end{align*}
Since this inequality holds for any $k\geq4\Lambda^{3},$ we can now choose $k$
so that $c/k<1/2,$ getting%
\[
\sum_{i,j=1}^{q}\left\Vert X_{i}X_{j}u\right\Vert _{L^{p}\left(  B_{R}\right)
}\leq c\left\Vert Lu\right\Vert _{L^{p}\left(  B_{R}\right)  }+c\left(
a_{m+2,\gamma R}^{\sharp}\right)  ^{1/\beta p_{1}}\sum_{i,j=1}^{q}\left\Vert
X_{i}X_{j}u\right\Vert _{L^{p}\left(  B_{R}\right)  }.
\]
Finally, exploiting the $VMO_{loc}$ assumption on the coefficients $a_{ij}$ we
can choose $R$ small enough to have $c\left(  a_{m+2,\gamma R}^{\sharp
}\right)  ^{1/\beta p_{1}}<1/2,$ so that%
\[
\sum_{i,j=1}^{q}\left\Vert X_{i}X_{j}u\right\Vert _{L^{p}\left(  B_{R}\right)
}\leq c\left\Vert Lu\right\Vert _{L^{p}\left(  B_{R}\right)  }%
\]
and we are done.
\end{proof}

\bigskip

\bigskip

\noindent\textsc{Dipartimento di Matematica}

\noindent\textsc{Politecnico di Milano}

\noindent\textsc{Via Bonardi 9, 20133 Milano, Italy}

\noindent\texttt{marco.bramanti@polimi.it}

\bigskip

\noindent\textsc{Instituto de Matematica Aplicada del Litoral (CONICET-UNL)}

\noindent\textsc{Departamento de Matematica (FIQ-UNL)}

\noindent\textsc{Santa Fe, Argentina}

\noindent\texttt{mtoschi@santafe-conicet.gov.ar}

\bigskip

\end{document}